\pgfplotsset{compat=1.18}
\newtheorem{theorem}{Theorem}
\newtheorem{lemma}{Lemma}
\newtheorem{proposition}{Proposition}
\newtheorem{corollary}{Corollary}
\newcommand{\Ne}{\mathbb{N}}
\newcommand{\Ze}{\mathbb{Z}}
\renewcommand{\Re}{\mathbb{R}}
\newcommand{\Qe}{\mathbb{Q}}
\newcommand{\jsr}{\mathrm{jsr}}
\newcommand{\ctilde}{\tilde{c}}
\newcommand{\Jtilde}{\tilde{J}}
\newcommand{\calA}{\mathcal{A}}
\newcommand{\calV}{\mathcal{V}}
\newif\ifextended
\title{\LARGE\bfseries On the differentiability of the value function of switched linear systems under arbitrary and controlled switching}
\author{Guillaume O.~Berger$^{1}$
\thanks{$^{1}$G.~Berger is with ICTEAM, UCLouvain, Belgium.
He is an FNRS postdoctoral researcher.
{\tt\small guillaume.berger@uclouvain.be}}}
\begin{document}

\maketitle
\thispagestyle{empty}
\pagestyle{empty}

\begin{abstract}
This paper studies the differentiability of the value function of switched linear systems under arbitrary switching and controlled switching, referred to as worst-case and optimal value functions respectively.
First, we show that the value functions are Lipschitz continuous, when the cost function is Lipschitz continuous.
Then, as the central contribution of this work, we show with examples that each of these functions can be non-differentiable on dense subsets of the state space, even if the cost function is smooth and Lipschitz continuous.
This has implications for optimal control and reinforcement learning since it implies that the exact computation of these value functions requires templates involving functions that are non-differentiable on dense subsets.
\end{abstract}

\section{Introduction}

Switched linear systems are multi-modal dynamical systems wherein each mode is a linear system.
These systems appear naturally in a wide range of applications---e.g., mechanical systems with impact, digital power converters, cyber-physical systems, etc.---and as abstractions of more complex dynamical or hybrid systems~\cite{liberzon2003switching,jungers2009thejoint,sun2011stability}.
The stability and stabilizability theory of switched linear systems has been extensively studied: despite intrinsic challenges (undecidability, NP-hardness, non-algebraicity, etc.~of computing stabilizing controllers or stability certificates)~\cite{blondel1999complexity,jungers2009thejoint}, several powerful tools (e.g., based on piecewise, multiple or sum-of-squares Lyapunov functions) have been provided for the stability analysis and stabilization of switched linear systems; see, e.g.,~\cite{sun2011stability,ahmadi2014joint,lin2009stability} for recent surveys.

In this paper, we first focus on the problem of optimal control of switched linear systems under controlled switching, as well as the worst-case cost analysis of switched linear systems under arbitrary switching.
This means that we consider a cost function mapping states to cost values, and we aim to optimize the \emph{cost-to-go} of any trajectory of the system by choosing the switching law appropriately, or estimating the worst-case \emph{cost-to-go} of any trajectory of the system under all possible switching sequences.
Here, the \emph{cost-to-go} refers the sum of all state costs of the trajectory.
This problem has received a lot of attention in the literature~\cite{rantzer2005onapproximate,zhang2009onthevalue,gorges2011optimal,zhang2012infinitehorizon,zhu2015optimal,antunes2017linear,wu2020linear,wu2020optimal,hou2024animproved}, due its importance in applications and its challenging nature.
In particular,~\cite{rantzer2005onapproximate,gorges2011optimal,zhang2012infinitehorizon,antunes2017linear,hou2024animproved} propose efficient techniques to approximate the \emph{optimal value function} (the function mapping states of the state space to the smallest cost-to-go among all trajectories starting from them), and~\cite{wu2020optimal} proposes a polynomial-time algorithm for computing this function under additional assumptions on the system.
Nevertheless, the question of the complexity of the optimal value function has not been properly addressed in the literature yet (see ``related work'' below).
This question is however crucial if one aims to leave the realm of (efficient or not) approximate methods.

This paper provides a pessimistic answer to the above question.
Indeed, after showing that the optimal and worst-case value functions are Lipschitz continuous if the cost function is Lipschitz continuous, we show that it may however be non-differentiable on a dense subset of the state space, even if the cost function is smooth (e.g.,~quadratic).
This finding is deceptive for the objective of exact computation of the value functions of switched linear systems because it implies that iterative algorithms have almost zero chance of finding them in finite time (since they usually deal with piecewise smooth functions).
We further show that this negative result holds also for systems whose matrices have rational entries, so that it holds with nonzero probability when working with matrices encoded with finite precision.
We obtained these results by building a switched linear system for which the optimal (or worst-case) switching law leads to a closed-loop system that is shown to be an \emph{Interval Exchange Map}~\cite{keane1975interval}.
This well-studied class of dynamical systems is known for their topological transitivity (existence of a dense orbit); a property that we leverage to obtain the non-differentiability on dense subsets.

\subsection*{Related work}

The paper~\cite{zhang2009onthevalue} shows that the \emph{finite-horizon} optimal value function of the linear quadratic discrete-time switched LQR problem is piecewise quadratic.
It also shows that the finite-horizon optimal value function converges exponentially to the (infinite-horizon) optimal value function when the horizon tends to infinity.
However, it does not discuss whether the sequence of piecewise quadratic functions converges toward a piecewise differentiable function or not.


The paper~\cite{harder2024onthecontinuity} discusses the continuity and smoothness of the value function for autonomous dynamical systems.
In particular, conditions are given under which the value function is Lipschitz continuous, and it is shown that there are systems for which the value function is nowhere differentiable.
These results differ from ours because 1) they focus on autonomous systems, 2) they assume a discount factor in the cost-to-go, and 3) they rely on a nonlinear system to prove the non-differentiability.
Nevertheless, our proof of the Lipschitz continuity of the value functions of switched linear systems uses similar arguments to that in~\cite{harder2024onthecontinuity}.
This result, which is not the main contribution of this work, is used in the proof of our main result on the possible non-differentiability of the value functions of switched linear systems on dense subsets.
The proof of the latter is radically different from that of the akin result in~\cite{harder2024onthecontinuity}.
In particular, we stress that the value functions of switched linear systems are differentiable almost everywhere (consequence of being Lipschitz continuous, by Rademacher's theorem~\cite{rudin1987real}), which is a striking difference with the example in~\cite{harder2024onthecontinuity}.
Our proof relies on the existence of a switched linear system for which the optimal or worst-case switching law produces a closed-loop system called an \emph{Interval Exchange Map}, which is known to be topologically transitive~\cite{keane1975interval}; a fact that we leverage to obtain the non-smoothness.


\emph{Notation.}
$\Ne$ is the set of nonnegative integers.
Given $m\in\Ne_{>0}$, $[m]$ is the set $\{1,\ldots,m\}$.
We denote the Euclidean vector norm by $\lVert\cdot\rVert$ (i.e., $\lVert x\rVert^2=x^\top x$).

\section{Problem statement}

\subsection{Switched linear systems}

We consider a discrete-time switched linear system of the form:
\begin{equation}\label{eq:sls-def}
\xi(t+1) = A_{\sigma(t)}\xi(t), \quad t\in\Ne,
\end{equation}
where $\xi(t)\in\Re^n$ is the \emph{state} at time $t$, $\sigma(t)\in[m]$ is the \emph{mode} at time $t$, and for all $i\in[m]$, $A_i\in\Re^{n\times n}$ represents the \emph{transition matrix} of mode $i$.
The function $\sigma:\Ne\to[m]$ mapping times to modes is called the \emph{switching signal}, which can be arbitrary or controlled.
The system~\eqref{eq:sls-def} is denoted by $\{A_i\}_{i=1}^m$.
Given $x\in\Re^n$, $\sigma:\Ne\to[m]$ and $t\in\Ne$, we denote by $\xi(t,x,\sigma)$ the solution at time $t$ of~\eqref{eq:sls-def} with signal $\sigma$ and initial state $\xi(0)=x$.

In this paper, we restrict our attention to \emph{stable} switched linear systems.
This means that the \emph{joint spectral radius}~\cite{jungers2009thejoint}, which represents the worst-case linear rate of growth of the trajectories of the system, is smaller than one.
Formally, the joint spectral radius of $\calA\coloneqq\{A_i\}_{i=1}^m$ is defined by
\begin{align*}
\jsr(\calA) &= \inf\,\{r\geq0:\exists\,C\geq0\:\text{s.t.}\\
&\quad\forall\,\xi\;\text{solution of~\eqref{eq:sls-def}},\\
&\quad\forall\,t\in\Ne,\:\lVert\xi(t)\rVert\leq Cr^t\lVert\xi(0)\rVert\},
\end{align*}
and we make the standing assumption that $\jsr(\{A_i\}_{i=1}^m)<1$.
The following result will be instrumental in our proofs:

\begin{theorem}[{\cite{jungers2009thejoint}}]\label{thm:norm-jsr}
Let $\calA\coloneqq\{A_i\}_{i=1}^m$ be a switched linear system and $\rho>\jsr(\calA)$.
There is a norm $\lVert\cdot\rVert_*$ such that for all solutions $\xi$ of~\eqref{eq:sls-def} and all $t\in\Ne$, it holds that $\lVert\xi(t)\rVert_*\leq\rho^t\lVert\xi(0)\rVert_*$.
\end{theorem}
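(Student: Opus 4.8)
The plan is to build the norm $\lVert\cdot\rVert_*$ explicitly as a rescaled ``supremum over products'' norm. The preliminary step is to trade $\rho$ for an intermediate rate: I fix $\hat\rho$ with $\jsr(\calA)<\hat\rho<\rho$ and apply the definition of $\jsr(\calA)$ with $r=\hat\rho$. Specializing it to the trajectory with switching signal $\sigma(0)=i_1,\dots,\sigma(t-1)=i_t$ and arbitrary initial state $x$, and recalling $\xi(t)=A_{i_t}\cdots A_{i_1}x$, this yields a constant $C\geq1$ with
\[
\lVert A_{i_t}\cdots A_{i_1}x\rVert \;\leq\; C\,\hat\rho^{\,t}\,\lVert x\rVert
\qquad\text{for all }t\in\Ne,\ i_1,\dots,i_t\in[m],\ x\in\Re^n .
\]

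Next I define
\[
\lVert x\rVert_* \;=\; \sup_{t\in\Ne}\ \sup_{i_1,\dots,i_t\in[m]}\ \rho^{-t}\,\lVert A_{i_t}\cdots A_{i_1}x\rVert ,
\]
where the $t=0$ term is understood to be $\lVert x\rVert$. From the previous bound, $\rho^{-t}\lVert A_{i_t}\cdots A_{i_1}x\rVert\leq C(\hat\rho/\rho)^t\lVert x\rVert$, and since $\hat\rho/\rho<1$ the supremum is finite with $\lVert x\rVert_*\leq C\lVert x\rVert$; the $t=0$ term gives $\lVert x\rVert_*\geq\lVert x\rVert$. So $\lVert\cdot\rVert_*$ is finite and, being a supremum of seminorms that dominates $\lVert\cdot\rVert$, it is a genuine norm, equivalent to $\lVert\cdot\rVert$ (indeed $\lVert x\rVert\leq\lVert x\rVert_*\leq C\lVert x\rVert$).

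Then I establish the one-step contraction. For any $j\in[m]$ and $x\in\Re^n$, every term in the supremum defining $\lVert A_jx\rVert_*$ satisfies
\[
\rho^{-t}\lVert A_{i_t}\cdots A_{i_1}A_jx\rVert \;=\; \rho\cdot\rho^{-(t+1)}\lVert A_{i_t}\cdots A_{i_1}A_jx\rVert \;\leq\; \rho\,\lVert x\rVert_* ,
\]
because $A_{i_t}\cdots A_{i_1}A_j$ is a product of $t+1$ matrices, so the quantity $\rho^{-(t+1)}\lVert A_{i_t}\cdots A_{i_1}A_jx\rVert$ is one of the terms in the supremum defining $\lVert x\rVert_*$. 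Taking the supremum over $t$ and $i_1,\dots,i_t$ gives $\lVert A_jx\rVert_*\leq\rho\lVert x\rVert_*$. Finally, for any solution $\xi$ of~\eqref{eq:sls-def}, writing $\xi(t)=A_{\sigma(t-1)}\cdots A_{\sigma(0)}\xi(0)$ and applying the contraction $t$ times gives $\lVert\xi(t)\rVert_*\leq\rho^t\lVert\xi(0)\rVert_*$, which is the claim.

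The only delicate point is the finiteness of $\lVert\cdot\rVert_*$: the definition of $\jsr(\calA)$ provides the bound $\lVert\xi(t)\rVert\leq C\rho^t\lVert\xi(0)\rVert$ only up to a constant $C$ that is in general strictly larger than $1$, so the $\rho$-rescaled products $\rho^{-t}\lVert A_{i_t}\cdots A_{i_1}x\rVert$ need not decay and the supremum could be infinite. Passing first to the strictly smaller rate $\hat\rho\in(\jsr(\calA),\rho)$ converts this sub-exponential control into genuine geometric decay $C(\hat\rho/\rho)^t\to0$, which is exactly what makes the construction work.
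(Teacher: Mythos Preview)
Your proof is correct; it is the standard construction of a $\rho$-contracting norm for a switched linear system with $\jsr(\calA)<\rho$. Note, however, that the paper does not give its own proof of this statement: Theorem~\ref{thm:norm-jsr} is quoted from~\cite{jungers2009thejoint} and used as a black box, so there is nothing to compare against beyond observing that your argument is the classical one underlying that reference.
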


Given a norm $\lVert\cdot\rVert'$, we denote its \emph{eccentricity} by
\[
\kappa(\lVert\cdot\rVert')=\frac{\max_{\lVert x\rVert=1}\lVert x\rVert'}{\min_{\lVert x\rVert=1}\lVert x\rVert'}.
\]

\subsection{Optimal and worst-case value functions}

We let $c:\Re^n\to\Re_{\geq0}$ be a cost function mapping states to cost values.
We assume that $c(0)=0$ and that $c$ is locally Lipschitz continuous around $0$.\footnote{Together with the stability assumption, this ensures that $J$, $J^\star$ and $J^\circ$ (defined just after) are finite everywhere.}
Given a signal $\sigma:\Ne\to[m]$ and $x\in\Re^n$, the \emph{cost-to-go} of the trajectory starting from $x$ with signal $\sigma$ is defined by
\[
J(x,\sigma)=\sum_{t=0}^\infty c(\xi(t,x,\sigma)).
\]
In the context of controlled switching, the goal is to find the switching signal (which may depend on $x$) such that the associated cost-to-go is minimal.
Hence, we define the \emph{optimal value function} as
\[
J^\star(x)=\inf_{\sigma:\Ne\to[m]}\sum_{t=0}^\infty c(\xi(t,x,\sigma)).
\]
This can be equivalently seen as the smallest cost-to-go among all trajectories starting from $x$.
On the other hand, in the context of arbitrary switching, we are interested in the largest cost-to-go of a trajectory starting from a given $x$.
Hence, we define the \emph{worst-case value function} as
\[
J^\circ(x)=\sup_{\sigma:\Ne\to[m]}\sum_{t=0}^\infty c(\xi(t,x,\sigma)).
\]

In this paper, we first show that $J^\star$ and $J^\circ$ are Lipschitz continuous when $c$ is Lipschitz continuous
Then, we show that they can be non-differentiable on dense subsets of the state space, even if $c$ is differentiable everywhere.

The following well-known \emph{dynamic programming equations} (also called \emph{Bellman equations}; see, e.g.,~\cite[Theorem~3.1]{meyn2022control}) will be useful in the proofs:

\begin{proposition}
For a switched linear system $\{A_i\}_{i=1}^m$ and a cost function $c$, let $J^\star$ and $J^\circ$ be the associated optimal and worst-case value functions respectively.
It holds that
\begin{align}
J^\star(x) &= c(x) + \min_{i\in[m]} J^\star(A_ix), \label{eq:dp-optimal} \\
J^\circ(x) &= c(x) + \max_{i\in[m]} J^\circ(A_ix). \label{eq:dp-worst}
\end{align}\vskip0pt
\end{proposition}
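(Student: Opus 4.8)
The plan is to reduce both identities to the \emph{shift structure} of the trajectories of~\eqref{eq:sls-def}. I would fix $x\in\Re^n$ and a switching signal $\sigma:\Ne\to[m]$, and let $S\sigma:\Ne\to[m]$ denote the left-shifted signal $(S\sigma)(s)=\sigma(s+1)$. A one-line induction on~\eqref{eq:sls-def} gives $\xi(t+1,x,\sigma)=\xi(t,A_{\sigma(0)}x,S\sigma)$ for every $t\in\Ne$; combining this with $\xi(0,x,\sigma)=x$, peeling off the first term of the cost-to-go, and reindexing the remaining sum by $t\mapsto t-1$ yields the elementary identity
\[
J(x,\sigma)=c(x)+J\bigl(A_{\sigma(0)}x,\,S\sigma\bigr).
\]
This is the only step where the system dynamics enter; everything else is bookkeeping on the optimization.

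Next I would take the infimum (resp.\ supremum) over $\sigma$ on both sides of this identity. The map $\sigma\mapsto(\sigma(0),S\sigma)$ is a bijection from the set of all signals onto $[m]\times\{\text{signals}\}$, and the right-hand side above depends on $\sigma$ only through the pair $(\sigma(0),S\sigma)$; hence optimizing over $\sigma$ factors into an optimization over the first mode $i=\sigma(0)\in[m]$ and an independent optimization over the tail signal. For the optimal value function this gives
\[
J^\star(x)=\inf_{i\in[m]}\Bigl(c(x)+\inf_{\tau:\Ne\to[m]}J(A_ix,\tau)\Bigr)=c(x)+\inf_{i\in[m]}J^\star(A_ix),
\]
and since $[m]$ is finite and nonempty the outer infimum is a minimum, which is~\eqref{eq:dp-optimal}. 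Repeating the computation with every $\inf$ replaced by $\sup$ yields~\eqref{eq:dp-worst}.

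I do not expect a genuine obstacle, but the point that needs care is that this proposition imposes no growth condition on $c$, so a priori $J(x,\sigma)$, $J^\star(x)$ and $J^\circ(x)$ may be $+\infty$. I would therefore run the whole argument with values in $[0,+\infty]$, using the conventions $c(x)+(+\infty)=+\infty$ (valid because $c$ is real-valued) and that $\inf$ and $\sup$ of an arbitrary subset of $[0,+\infty]$ are well defined; under these conventions the shift identity, the factorization of the optimization, and the exchange $c(x)+\inf_i(\cdot)=\inf_i\bigl(c(x)+(\cdot)\bigr)$ all remain valid, so the argument goes through unchanged.
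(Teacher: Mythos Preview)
Your argument is correct: the shift identity $J(x,\sigma)=c(x)+J(A_{\sigma(0)}x,S\sigma)$ together with the bijection $\sigma\leftrightarrow(\sigma(0),S\sigma)$ is exactly the standard derivation of the Bellman equations, and your remark that everything goes through in $[0,+\infty]$ is the right way to handle the absence of any integrability hypothesis on $c$ in the statement.

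As for comparison with the paper: the paper does not actually prove this proposition at all---it simply declares the identities ``well-known'' and points to a dynamic-programming textbook (Meyn, Theorem~3.1). Your write-up is therefore strictly more informative than what the paper provides; it supplies the elementary self-contained proof that the paper omits, and the route you take (peel off the first cost, factor the optimization over the first mode and the tail) is precisely the classical one underlying the cited reference.
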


\section{Lipschitz continuity of the value functions}

In this section, we show that the optimal and worst-case value functions are Lipschitz continuous if the cost function is Lipschitz continuous.
As a reminder, a function $f:\Re^n\to\Re$ is \emph{Lipschitz continuous} on a set $X\subseteq\Re^n$ with Lipschitz constant $L$ if for all $x,y\in X$, it holds that
\[
\lvert f(x)-f(y)\rvert\leq L\lVert x-y\rVert.
\]

\begin{theorem}\label{thm:lipschitz}
Let $\calA\coloneqq\{A_i\}_{i=1}^m\subseteq\Re^{n\times n}$ be a switched linear system and $c:\Re^n\to\Re_{\geq0}$ a cost function.
Let $J^\star$ and $J^\circ$ be the associated optimal and worst-case value functions respectively.
Assume that $\jsr(\calA)<1$ and $c$ is Lipschitz continuous on a neighborhood $X\subseteq\Re^n$ of the origin, with Lipschitz constant $L$.
Let $\rho<1$ and $\lVert\cdot\rVert_*$ be as in Theorem~\ref{thm:norm-jsr}.
Let $\eta>0$ be such that $B_*(\eta)\coloneqq\{x\in\Re^n:\lVert x\rVert_*\leq\eta\}\subseteq X$.
It holds that $J^\star$ and $J^\circ$ are Lipschitz continuous on $B_*(\eta)$, with Lipschitz constant $M=\kappa(\lVert\cdot\rVert_*)L/(1-\rho)$.
\end{theorem}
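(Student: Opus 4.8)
The plan is to prove a Lipschitz estimate for the cost-to-go $J(\cdot,\sigma)$ that is \emph{uniform over the switching signal} $\sigma$, and then to pass this estimate to the infimum $J^\star$ and the supremum $J^\circ$ by an $\varepsilon$-optimal-signal argument.

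First I would record two preliminary facts about $B_*(\eta)$. (i) It is forward invariant: if $\lVert x\rVert_*\le\eta$ then, applying Theorem~\ref{thm:norm-jsr} to the one-step trajectory $x\mapsto A_ix$, we get $\lVert A_ix\rVert_*\le\rho\lVert x\rVert_*\le\eta$ for every $i\in[m]$; hence any trajectory $\xi(\cdot,x,\sigma)$ issued from $x\in B_*(\eta)$ stays inside $B_*(\eta)\subseteq X$, so the Lipschitz bound on $c$ is available along it. (ii) We may and do assume $c(0)=0$ (otherwise $\lVert\xi(t,x,\sigma)\rVert_*\le\rho^t\lVert x\rVert_*\to0$ forces $c(\xi(t,x,\sigma))\to c(0)>0$, so $J\equiv J^\star\equiv J^\circ\equiv+\infty$ on every neighborhood of the origin and the statement is vacuous). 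With $c(0)=0$, the bound $c(\xi(t,x,\sigma))=\lvert c(\xi(t,x,\sigma))-c(0)\rvert\le L\lVert\xi(t,x,\sigma)\rVert$ together with the norm-equivalence estimate of the next step shows that $J(x,\sigma)$ is finite, uniformly in $\sigma$, for $x\in B_*(\eta)$; hence $J^\star$ and $J^\circ$ are real-valued there.

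Next I would establish the core estimate. Fix $\sigma:\Ne\to[m]$ and $x,y\in B_*(\eta)$. By linearity of~\eqref{eq:sls-def}, the difference $\xi(t,x,\sigma)-\xi(t,y,\sigma)=A_{\sigma(t-1)}\cdots A_{\sigma(0)}(x-y)$ is exactly the time-$t$ value of a trajectory of~\eqref{eq:sls-def} started at $x-y$, so Theorem~\ref{thm:norm-jsr} gives $\lVert\xi(t,x,\sigma)-\xi(t,y,\sigma)\rVert_*\le\rho^t\lVert x-y\rVert_*$. Converting between $\lVert\cdot\rVert_*$ and the Euclidean norm through the eccentricity yields $\lVert\xi(t,x,\sigma)-\xi(t,y,\sigma)\rVert\le\kappa(\lVert\cdot\rVert_*)\,\rho^t\,\lVert x-y\rVert$. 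Applying the Lipschitz bound on $c$ term by term (legitimate by forward invariance) and summing a geometric series then gives
\[
\lvert J(x,\sigma)-J(y,\sigma)\rvert\le\sum_{t=0}^\infty L\lVert\xi(t,x,\sigma)-\xi(t,y,\sigma)\rVert\le\frac{L\,\kappa(\lVert\cdot\rVert_*)}{1-\rho}\lVert x-y\rVert=M\lVert x-y\rVert,
\]
a bound that does not depend on $\sigma$.

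Finally I would transfer the bound to the value functions. For $J^\star$: given $\varepsilon>0$ and $x,y\in B_*(\eta)$, choose $\sigma$ with $J(y,\sigma)\le J^\star(y)+\varepsilon$; then $J^\star(x)\le J(x,\sigma)\le J(y,\sigma)+M\lVert x-y\rVert\le J^\star(y)+M\lVert x-y\rVert+\varepsilon$, and letting $\varepsilon\to0$ and exchanging the roles of $x$ and $y$ yields $\lvert J^\star(x)-J^\star(y)\rvert\le M\lVert x-y\rVert$. The case of $J^\circ$ is symmetric: pick instead $\sigma$ with $J(x,\sigma)\ge J^\circ(x)-\varepsilon$. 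The argument is otherwise routine; the points needing care are verifying that \emph{both} compared trajectories remain in the region $X$ where $c$ is Lipschitz (handled by forward invariance of $B_*(\eta)$), keeping track of the norm-equivalence constants so that the final constant is precisely $M=\kappa(\lVert\cdot\rVert_*)L/(1-\rho)$, and justifying the interchange of the $\sigma$-uniform bound with the inf/sup, which cannot appeal to attainment of the extremum and instead uses $\varepsilon$-optimal signals---the supremum case for $J^\circ$ working identically here.
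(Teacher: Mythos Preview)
Your proof is correct and follows essentially the same route as the paper's: a uniform-in-$\sigma$ Lipschitz bound on $J(\cdot,\sigma)$ obtained from linearity, Theorem~\ref{thm:norm-jsr}, the eccentricity conversion, and a geometric series, then passage to $J^\star$ and $J^\circ$ via $\varepsilon$-optimal signals. You are slightly more explicit than the paper (you spell out forward invariance, the $c(0)=0$ reduction, finiteness of the value functions, and the $J^\circ$ case), but there is no substantive difference in method.
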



\begin{proof}
We do the proof for $J^\star$, and it extends straightforwardly to $J^\circ$.
Denote $r=\kappa(\lVert\cdot\rVert_*)$ and $M=rL/(1-\rho)$.
Let $x,y\in B_*(\eta)$.
Let $\epsilon>0$.
We will show that $J^\star(x)\leq J^\star(y)+M\lVert x-y\rVert+\epsilon$.
Since $x$, $y$ and $\epsilon$ are arbitrary, this will conclude the proof.
Let $\sigma$ be a switching signal such that $J(y,\sigma)\leq J^\star(y)+\epsilon$.
For all $t\in\Ne$, it holds that
\[
\lVert\xi(t,x,\sigma)-\xi(t,y,\sigma)\rVert_*=\lVert\xi(t,x-y,\sigma)\rVert_*\leq\rho^t\lVert x-y\rVert_*.
\]
Hence, it holds that for all $t\in\Ne$,
\[
\lVert\xi(t,x,\sigma)-\xi(t,y,\sigma)\rVert\leq r\rho^t\lVert x-y\rVert.
\]
Furthermore, for all $t\in\Ne$ and $z\in\{x,y\}$,
\[
\lVert\xi(t,z,\sigma)\rVert_*\leq\rho^t\lVert z\rVert_*\leq\eta.
\]
Thus, by the assumption on $c$, we obtain that for all $t\in\Ne$,
\[
c(\xi(t,x,\sigma))\leq c(\xi(t,y,\sigma))+r\rho^tL\lVert x-y\rVert.
\]
It follows that
\begin{align*}
J(x,\sigma) &= \sum_{t=0}^\infty c(\xi(t,x,\sigma)) \\
&\leq \sum_{t=0}^\infty c(\xi(t,y,\sigma)) + \frac{rL}{1-\rho} \lVert x-y\rVert \\
&= J(y,\sigma) + \frac{rL}{1-\rho} \lVert x-y\rVert.
\end{align*}
Hence, we get $J^\star(x)\leq J^\star(y)+\epsilon+M\lVert x-y\rVert$, concluding the proof.
\end{proof}

As a corollary, we obtain that $J^\star$ and $J^\circ$ are differentiable almost everywhere.
This is in stark contrast with the example in~\cite[Proposition~1]{harder2024onthecontinuity} where the value function of a nonlinear dynamical system is nowhere differentiable.

\begin{corollary}\label{cor:almost-differentiable}
Let $\calA\coloneqq\{A_i\}_{i=1}^m\subseteq\Re^{n\times n}$ be a switched linear system and $c:\Re^n\to\Re_{\geq0}$ a cost function.
Let $J^\star$ and $J^\circ$ be the associated optimal and worst-case value functions respectively.
Assume that $\jsr(\calA)<1$ and $c$ is Lipschitz continuous on a neighborhood of the origin.
It holds that $J^\star$ and $J^\circ$ are differentiable almost everywhere (for the Lebesgue measure) on a neighborhood of the origin.
\end{corollary}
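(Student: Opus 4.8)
The plan is to derive Corollary~\ref{cor:almost-differentiable} as an immediate consequence of Theorem~\ref{thm:lipschitz} together with Rademacher's theorem. First I would recall that Rademacher's theorem states that a function that is Lipschitz continuous on an open subset of $\Re^n$ is differentiable Lebesgue-almost everywhere on that subset. So the only real task is to produce, for each point in a neighborhood of the origin, a smaller open neighborhood on which $J^\star$ and $J^\circ$ are Lipschitz; local Lipschitzness on a neighborhood of every point of an open set implies (global) Lipschitzness fails in general, but it \emph{does} imply differentiability a.e., since "differentiable a.e." is a local property that can be checked on a countable cover by balls.

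Concretely, I would proceed as follows. Let $U$ be the neighborhood of the origin on which $c$ is Lipschitz continuous; shrinking if necessary we may take $U$ open. Fix $\rho<1$ with $\rho>\jsr(\calA)$ and let $\lVert\cdot\rVert_*$ be the norm from Theorem~\ref{thm:norm-jsr}. Choose $\eta>0$ with $B_*(\eta)\subseteq U$. Theorem~\ref{thm:lipschitz} then gives that $J^\star$ and $J^\circ$ are Lipschitz continuous on $B_*(\eta)$, hence in particular on the (nonempty, open in the Euclidean topology) interior of $B_*(\eta)$, call it $V$. Note $V$ is a neighborhood of the origin because $\lVert\cdot\rVert_*$ and $\lVert\cdot\rVert$ are equivalent norms. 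Now apply Rademacher's theorem on $V$: both $J^\star$ and $J^\circ$ are differentiable at every point of $V$ except on a set of Lebesgue measure zero. This is exactly the claimed statement.

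The main (and essentially only) obstacle is a bookkeeping one: making sure that the set on which we conclude differentiability is genuinely a neighborhood of the origin and is open, so that Rademacher applies verbatim. This is handled by passing from the $\lVert\cdot\rVert_*$-ball $B_*(\eta)$, which need not be open nor a Euclidean ball, to its interior, and invoking norm equivalence to see the interior still contains a Euclidean ball around $0$. There is no further analytic content; the Lipschitz estimate has already been done in Theorem~\ref{thm:lipschitz}, and Rademacher's theorem is quoted from~\cite{rudin1987real}. I would therefore write the proof in three or four lines: invoke Theorem~\ref{thm:lipschitz} to get Lipschitz continuity on a neighborhood of the origin, then invoke Rademacher's theorem to upgrade this to differentiability almost everywhere on that neighborhood.
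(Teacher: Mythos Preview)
Your proposal is correct and follows essentially the same route as the paper: invoke Theorem~\ref{thm:lipschitz} to get Lipschitz continuity on a neighborhood of the origin, then cite a classical real-analysis result to conclude differentiability almost everywhere. The paper's one-line proof phrases the second step via absolute continuity (Rudin, Theorem~7.20), whereas you invoke Rademacher's theorem and add the bookkeeping about norm equivalence and open interiors; your version is arguably cleaner for $n\geq2$, but the substance is identical.
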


\begin{proof}
Rademacher's theorem, a standard result in real analysis (see, e.g.,~\cite[Theorem~7.20]{rudin1987real}), states that locally Lipschitz continuous functions are differentiable almost everywhere (for the Lebesgue measure).
\end{proof}

\section{Possible non-differentiability of the value functions on dense subsets}

In this section, we prove the main result of this paper, stating that there exist switched linear systems for which the optimal or worst-case value function is non-differentiable on a dense subset of the state space, even if the cost function is smooth everywhere.\footnote{%
We note that this result is \emph{not} in contradiction with Corollary~\ref{cor:almost-differentiable} since there are dense subsets of $\Re^n$ that have zero Lebesgue measure (e.g., $\Qe^n\subseteq\Re^n$).
Said otherwise, the subtle point is that the value function can be differentiable almost everywhere (for the Lebesgue measure) and still be non-differentiable on a dense set.}

\begin{theorem}\label{thm:non-differentiable}
Let $n\in\Ne_{\geq2}$.
There exist a switched linear system $\calA\coloneqq\{A_i\}_{i=1}^m\subseteq\Re^{n\times n}$ and a cost function $c:\Re^n\to\Re_{\geq0}$ such that $\jsr(\calA)<1$, $c$ is $C^\infty$ on $\Re^n$ and Lipschitz continuous on a neighborhood of the origin, and $J^\star$ and $J^\circ$ are non-differentiable on dense subsets of $\Re^n$, where $J^\star$ and $J^\circ$ are the associated optimal and worst-case value functions respectively.
Furthermore, the matrices in $\calA$ can be assumed to have rational entries, i.e., $\calA\subseteq\Qe^{n\times n}$.
\end{theorem}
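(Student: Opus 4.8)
The plan is to explicitly construct a switched linear system whose optimal (and, in a companion construction, worst-case) closed-loop dynamics, restricted to an appropriate invariant set, reproduces an interval exchange transformation (IET). I would work first in $n=2$, using polar-like coordinates: take a contraction factor $\lambda\in(0,1)$ common to all modes (so that $A_i = \lambda R_i$ where $R_i$ acts "angularly"), guaranteeing $\jsr(\calA)\le\lambda<1$ regardless of the switching signal. The state space minus the origin then fibers over the angular coordinate $\theta$, and the choice of mode at each step induces a piecewise-defined map on $\theta$. I would choose the matrices $R_i$ so that on a partition of the circle (or of an interval, after cutting the circle) into finitely many subintervals, the induced angular map is exactly a prescribed IET with irrational rotation-type parameters, hence minimal: every orbit is dense. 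To make the \emph{optimal} switching law select precisely this map, I would design the cost function $c$ so that, via the Bellman equation \eqref{eq:dp-optimal}, the minimizing mode at state $x$ is the one whose angular action implements the correct branch of the IET; concretely, $c$ should be a smooth, Lipschitz, radially-damped bump (e.g.\ $c(x)=g(\theta(x))\,h(\lVert x\rVert)$ smoothed near the origin, or a sum of Gaussians) whose angular profile $g$ makes exactly one mode optimal on each IET subinterval. For the worst-case function $J^\circ$ one flips the inequalities and designs $c$ so the \emph{maximizing} mode implements the IET.

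The heart of the argument is then to transfer chaos of the IET to non-differentiability of $J^\star$. By Theorem~\ref{thm:lipschitz}, $J^\star$ is Lipschitz near the origin, hence (Corollary~\ref{cor:almost-differentiable}) differentiable a.e.; I want to exhibit a dense set of points where it fails to be differentiable. The mechanism: at any state $x$ lying on a \emph{discontinuity fiber} of the IET (i.e.\ $\theta(x)$ is an endpoint of a partition interval, or a preimage under the dynamics of such an endpoint), the optimal trajectory branches — two different mode choices are tied for optimal, and they send $x$ to genuinely different forward orbits whose tails contribute differently to the cost-to-go. I would show that the one-sided directional derivatives of $J^\star$ in the angular direction disagree at such $x$, by writing $J^\star$ near $x$ as a minimum of two smooth pieces (the cost-to-go along the "left branch" orbit and along the "right branch" orbit) whose gradients differ; the difference is controlled by a convergent series $\sum_t \lambda^t (\nabla c)(\cdot)$ along the two distinct orbits, and generic choice of $c$ and parameters makes this nonzero. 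Minimality of the IET then gives that the set of such branch points — the forward and backward orbit of the finitely many partition endpoints — is dense in the circle, hence the corresponding cone of states is dense in $\Re^n\setminus\{0\}$, and by homogeneity-type scaling one gets density in a neighborhood of the origin, which extends to all of $\Re^n$ after noting the construction only needs to be valid near the origin (outside a neighborhood one may take $c$ to vanish smoothly, and $J^\star=J^\circ$ far out equals the near-origin value after finitely many contracting steps).

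Several technical points need care. First, one must verify that the ties at branch points are \emph{genuine} — that no third mode strictly beats the two IET branches — which is a matter of making the angular cost profile $g$ separated enough; and that away from branch fibers the optimal mode is \emph{unique}, so that $J^\star$ is locally a single smooth piece there (this is where Lipschitzness of the tail and uniform contraction let one differentiate the series term by term). Second, to get the \emph{rational-matrix} strengthening, I would note that an IET with rational-length intervals but irrational \emph{translation vector} can still be minimal, and that a rotation-type angular map can be realized by matrices whose entries, after clearing a common denominator and absorbing $\lambda$, are rational — alternatively, realize a skew-product or a two-interval "rotation by irrational angle" whose defining matrix is a rational scaling of a rational rotation matrix (rational points on the circle $a^2+b^2=c^2$ give rational rotation matrices of infinite order, hence irrational rotation number), so $A_i=\lambda\begin{pmatrix} a/c & -b/c\\ b/c & a/c\end{pmatrix}$ with $\lambda\in\Qe\cap(0,1)$ has rational entries and $\jsr<1$. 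The main obstacle I anticipate is the \emph{combination} of constraints on $c$: it must be globally $C^\infty$, Lipschitz near $0$, nonnegative, and simultaneously make the argmin in \eqref{eq:dp-optimal} pick out the IET branches on each subinterval while keeping ties exactly on the endpoint fibers and keeping the branch-derivative discrepancy nonzero; threading all of these at once — and checking that the induced $J^\star$ really is piecewise-smooth-with-a-jump-in-derivative rather than, say, accidentally $C^1$ — is the delicate part, and I would handle it by choosing $c$ with several free parameters and invoking a genericity/transversality argument to rule out the non-generic coincidences.
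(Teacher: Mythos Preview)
Your overall architecture matches the paper's: take $A_i=\rho R_{\gamma_i}$ as scaled planar rotations (so $\jsr=\rho<1$), reduce to an angular variable, arrange the Bellman minimizer to realize an irrational rotation (a two-interval IET), and propagate non-differentiability from the single switching angle along its dense backward orbit; the rational-matrix point via a Pythagorean triple with irrational angle (Niven) is exactly what the paper does.

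The genuine gap is your mechanism for non-differentiability at the branch angle $\nu$. You propose to write $J^\star$ near $\nu$ as the minimum of two \emph{smooth} pieces---the cost-to-go along the ``left'' and ``right'' deterministic orbits---and compare their gradients. But these pieces are not smooth, and $J^\star$ is not equal to either of them on a one-sided neighborhood of $\nu$: the optimal signal for a point slightly left of $\nu$ agrees with the left-branch signal at time $0$ but will differ at later times (future branch crossings accumulate densely), so your series $\sum_t \lambda^t\nabla c(\cdot)$ along a fixed orbit is not the derivative of anything relevant. The paper bypasses this entirely with a direct quantitative estimate: with $\tilde J^\star(\theta)=J^\star(x(\theta))$ and $h(\theta)=\min\{\tilde J^\star(\theta+\alpha),\tilde J^\star(\theta+\beta)\}$, one bounds the one-sided difference quotients of $h$ at $\nu$ using only the explicit derivative of $\tilde c(\theta)=1+\sin^2\theta$ and the global Lipschitz constant $M$ of $\tilde J^\star$ supplied by Theorem~\ref{thm:lipschitz}. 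Choosing $\rho$ tiny ($\rho=0.01$) makes the $\tilde c'$ term dominate the $\rho^2 M$ Lipschitz tail, forcing left slope $\geq 1/2$ and right slope $\leq -1/2$ at $\nu$; no smoothness of the branches is ever invoked. The same ``$\rho$ small enough'' trick determines the sign of $\Delta\tilde J^\star$ on the whole circle, so the optimal mode is identified everywhere without any genericity or transversality argument---and the cost is simply the global quadratic $c(x)=x_1^2+2x_2^2$, avoiding your bump/smoothing issues at the origin. Finally, the paper does not build a second system for $J^\circ$: the symmetry $\tilde c(\theta+\pi/2)=3-\tilde c(\theta)$ gives $\tilde J^\circ(\theta)=-\tilde J^\star(\theta+\pi/2)+3/(1-\rho^2)$, so the \emph{same} $\calA$ and $c$ witness both conclusions.
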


\subsection{Proof of Theorem~\ref{thm:non-differentiable}}

The proof consists in building a switched linear system and a cost function that satisfy the requirements of the theorem.
First, we consider the case $n=2$, and then we generalize.

\subsubsection{Case $n=2$}

The construction for $n=2$ is as follows.
Consider two angles,
\[
\alpha\in(\pi/8,3\pi/8) \;\; \text{and} \;\; \beta=\alpha-\pi/2\in(-3\pi/8,-\pi/8),
\]
such that $\alpha/\pi\notin\Qe$.
The system consists of two matrices, which are \emph{scaled rotations}, of angle $\alpha$ and $\beta$ respectively, and scaling factor $\rho$:
\begin{align}
A_1 &= \rho\left[\begin{array}{cc}
\cos(\alpha) & -\sin(\alpha) \\
\sin(\alpha) & \cos(\alpha)
\end{array}\right], \label{eq:A1} \\
A_2 &= \rho\left[\begin{array}{cc}
\cos(\beta) & -\sin(\beta) \\
\sin(\beta) & \cos(\beta)
\end{array}\right], \label{eq:A2}
\end{align}
where $\rho=0.01$.
We denote $\calA=\{A_1,A_2\}$.
We note that $\alpha$ and $\beta$ can be chosen so that $A_1$ and $A_2$ have rational entries; see \ifextended Lemma~\ref{lem:rational} at the end of this section\else the extended version~\cite{berger2025onthedifferentiability}\fi.
The following properties of the system will be useful.%
\footnote{All free variables appearing in the results (lemmas, corollaries, etc.)~in the rest of this section refer to quantities defined in the body of the text.}

\begin{lemma}\label{lem:sys-stable}
The system $\calA$ is stable with $\jsr(\calA)=\rho$, and the norm $\lVert\cdot\rVert_*=\lVert\cdot\rVert$ satisfies the requirements of Theorem~\ref{thm:norm-jsr} with this $\rho$.
\end{lemma}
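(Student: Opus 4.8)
The plan is to exploit the fact that each $A_i$ is a pure scaling of an orthogonal map, so that the Euclidean norm is itself an extremal norm for $\calA$. Concretely, write $A_i=\rho R_i$ where $R_i$ is the rotation matrix of angle $\alpha$ (for $i=1$) or $\beta$ (for $i=2$). Since $R_i$ is orthogonal, $\lVert A_i x\rVert=\rho\lVert x\rVert$ for every $x\in\Re^2$. By induction on $t$, any product $A_{i_t}\cdots A_{i_1}$ of matrices from $\calA$ satisfies $\lVert A_{i_t}\cdots A_{i_1}x\rVert=\rho^t\lVert x\rVert$; equivalently, every solution $\xi$ of~\eqref{eq:sls-def} obeys $\lVert\xi(t)\rVert=\rho^t\lVert\xi(0)\rVert$ for all $t\in\Ne$.

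This single identity delivers both claims. First, taking $\lVert\cdot\rVert_*=\lVert\cdot\rVert$, the inequality $\lVert\xi(t)\rVert_*\leq\rho^t\lVert\xi(0)\rVert_*$ holds for every solution and every $t$ (in fact with equality), so $\lVert\cdot\rVert$ meets the requirement of Theorem~\ref{thm:norm-jsr} with this value of $\rho$. Second, the same identity with $C=1$ and $r=\rho$ shows $\jsr(\calA)\leq\rho$ directly from the definition.

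For the reverse inequality $\jsr(\calA)\geq\rho$, I would argue that no smaller rate is admissible: if $\lVert\xi(t)\rVert\leq Cr^t\lVert\xi(0)\rVert$ held for all solutions and all $t$ with some $r<\rho$, then applying it to the solution $\xi(t)=A_1^t x$ (for any $x\neq0$) gives $\rho^t\lVert x\rVert\leq Cr^t\lVert x\rVert$, i.e.\ $(\rho/r)^t\leq C$ for all $t$, which is impossible since $\rho/r>1$. Hence $r\geq\rho$ for every admissible $r$, so $\jsr(\calA)\geq\rho$, and combining the two bounds gives $\jsr(\calA)=\rho$.

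There is essentially no obstacle here: the lemma is immediate once one records that rotation matrices are Euclidean isometries. The only point requiring a word of care is the lower bound on the joint spectral radius, for which it suffices to exhibit one trajectory (e.g.\ under the constant signal $\sigma\equiv1$) whose growth rate is exactly $\rho$; the stated assumptions $\alpha\in(\pi/8,3\pi/8)$ and $\alpha/\pi\notin\Qe$ play no role in this lemma and are only needed later in the construction.
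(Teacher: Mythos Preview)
Your proof is correct and follows essentially the same approach as the paper: both arguments observe that each $A_i/\rho$ is a rotation (hence a Euclidean isometry), deduce the identity $\lVert\xi(t)\rVert=\rho^t\lVert\xi(0)\rVert$ for every solution, and read off the conclusions from there. Your version is simply more explicit about the lower bound $\jsr(\calA)\geq\rho$, which the paper leaves implicit.
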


\begin{proof}
It holds that for all $x\in\Re^2$ and $A\in\calA$, $\lVert Ax\rVert=\rho\lVert (A/\rho)x\rVert=\rho\lVert x\rVert$ where we used that $A/\rho$ is a rotation matrix.
Hence, for all solution $\xi$ of system $\calA$, it holds that $\lVert\xi(t)\rVert=\rho^t\lVert\xi(0)\rVert$.
\end{proof}

As for the cost function, we let
\begin{equation}\label{eq:cost}
c(x)=x_1^2+2x_2^2, \quad \text{where} \quad x=[x_1,x_2]^\top.
\end{equation}
The function $c$ is $C^\infty$ on $\Re^2$ and Lipschitz continuous on the set $B\coloneqq\{x\in\Re^2:\lVert x\rVert\leq1\}$, with Lipschitz constant $L=4$.
Moreover, $c$ is homogeneous of degree $2$, meaning that for all $\lambda\in\Re$ and $x\in\Re^2$, $c(\lambda x)=\lambda^2c(x)$.

Let $J^\star$ be the optimal value function of $\calA$ with cost $c$.
We do the analysis of $J^\star$ below, in particular showing in several steps that it is non-differentiable on a dense subset of $\Re^2$.
(The worst-case value function will be discussed later, with similar arguments.)

First, by Theorem~\ref{thm:lipschitz}, $J^\star$ is Lipschitz continuous:

\begin{lemma}\label{lem:sys-lipschitz}
The function $J^\star$ is Lipschitz continuous on $B$, with Lipschitz constant $M=L/(1-\rho)$.
\end{lemma}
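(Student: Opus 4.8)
The plan is to obtain this as a direct specialization of Theorem~\ref{thm:lipschitz} to the system $\calA$ and cost $c$ built above. First I would recall from Lemma~\ref{lem:sys-stable} that for this particular system the stabilizing norm furnished by Theorem~\ref{thm:norm-jsr} can be taken to be the Euclidean norm itself, $\lVert\cdot\rVert_*=\lVert\cdot\rVert$, with $\rho$ the scaling factor fixed in the construction ($\rho<1$). Consequently its eccentricity is $\kappa(\lVert\cdot\rVert_*)=1$, since the maximum and the minimum of $\lVert x\rVert$ over the unit sphere $\lVert x\rVert=1$ are both equal to $1$.

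Next I would verify that the remaining hypotheses of Theorem~\ref{thm:lipschitz} are met. The cost $c(x)=x_1^2+2x_2^2$ is $C^\infty$ on all of $\Re^2$, hence in particular Lipschitz continuous on the closed unit ball $B=\{x:\lVert x\rVert\le1\}$; bounding $\lVert\nabla c\rVert$ on $B$ gives the Lipschitz constant $L=4$. Since $\lVert\cdot\rVert_*=\lVert\cdot\rVert$, the ball $B_*(\eta)$ appearing in Theorem~\ref{thm:lipschitz} is exactly the Euclidean ball of radius $\eta$, so taking $\eta=1$ makes $B_*(\eta)=B$, which is contained in the neighborhood $X=B$ on which $c$ is Lipschitz. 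Applying Theorem~\ref{thm:lipschitz} then yields that $J^\star$ is Lipschitz continuous on $B$ with constant $\kappa(\lVert\cdot\rVert_*)L/(1-\rho)=L/(1-\rho)=M$. The identical argument, using the worst-case half of Theorem~\ref{thm:lipschitz}, would also cover $J^\circ$ if needed.

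There is essentially no obstacle in this step; it is pure bookkeeping. The only two points worth spelling out are that the norm provided by Lemma~\ref{lem:sys-stable} is literally the Euclidean norm (so its eccentricity is $1$, which is what cancels the factor $\kappa$ and produces the clean constant $L/(1-\rho)$), and that the ball $B$ on which $c$ was declared Lipschitz coincides with the set $B_*(1)$ on which Theorem~\ref{thm:lipschitz} delivers its estimate, so no shrinking of the domain is necessary.
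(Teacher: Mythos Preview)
Your proposal is correct and follows exactly the paper's own approach: the paper's proof is simply ``Direct from Theorem~\ref{thm:lipschitz} and Lemma~\ref{lem:sys-stable},'' and you have spelled out precisely the bookkeeping behind that one-liner (Euclidean norm gives $\kappa=1$, $B_*(1)=B$, hence $M=L/(1-\rho)$).
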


\begin{proof}
Direct from Theorem~\ref{thm:lipschitz} and Lemma~\ref{lem:sys-stable}.
\end{proof}

Second, a consequence of $c$ being homogeneous is that $J^\star$ is homogeneous:

\begin{lemma}\label{lem:sys-homogeneous}
The function $J^\star$ is homogeneous of degree $2$, i.e., for all $\lambda\in\Re$ and $x\in\Re^2$, $J^\star(\lambda x)=\lambda^2J^\star(x)$.
\end{lemma}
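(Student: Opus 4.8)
The plan is to exploit the homogeneity of the cost function $c$, which is already established to transfer nicely under the linear dynamics. First I would observe that for any switching signal $\sigma:\Ne\to[m]$ and any scalar $\lambda\in\Re$, the solutions satisfy $\xi(t,\lambda x,\sigma)=\lambda\,\xi(t,x,\sigma)$, since~\eqref{eq:sls-def} is linear in the state. This is the key structural fact, and it holds because each $A_{\sigma(t)}$ is a linear map, so scaling the initial condition scales the entire trajectory by the same factor at every time step. Importantly, the set of admissible signals does not depend on the initial state $x$, so the same family of signals is available for $x$ and for $\lambda x$.

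Next I would compute the cost-to-go along a fixed signal. Using the homogeneity of $c$ (degree $2$), we get for each $t\in\Ne$ that $c(\xi(t,\lambda x,\sigma))=c(\lambda\,\xi(t,x,\sigma))=\lambda^2 c(\xi(t,x,\sigma))$. Summing over $t$ gives $J(\lambda x,\sigma)=\lambda^2 J(x,\sigma)$ (the sum of a nonnegative series is well-defined in $[0,\infty]$, and multiplying termwise by the constant $\lambda^2\ge 0$ multiplies the sum by $\lambda^2$, with the convention $0\cdot\infty=0$ handled by the case $\lambda=0$ separately, where trivially $J(0,\sigma)=0=0^2 J(x,\sigma)$ if finite).

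Then I would pass to the infimum over signals. Since the map $\sigma\mapsto\xi(\cdot,\lambda x,\sigma)$ is, up to the factor $\lambda$, the same as $\sigma\mapsto\xi(\cdot,x,\sigma)$, and $\lambda^2\ge 0$, taking infima commutes with multiplication by the nonnegative constant $\lambda^2$: $J^\star(\lambda x)=\inf_\sigma J(\lambda x,\sigma)=\inf_\sigma \lambda^2 J(x,\sigma)=\lambda^2\inf_\sigma J(x,\sigma)=\lambda^2 J^\star(x)$. For $\lambda=0$ this reads $J^\star(0)=0$, which is consistent since $c(0)=0$ and the zero signal keeps the trajectory at the origin.

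I do not expect any real obstacle here; this is a routine homogeneity argument. The only mild subtlety is the bookkeeping for $\lambda=0$ and for the possibility that $J^\star(x)=+\infty$, but since $c\ge 0$ these are handled by the standard conventions on extended-real-valued sums and infima, and in the regime of interest ($x$ near the origin) $J^\star(x)$ is finite by Lemma~\ref{lem:sys-lipschitz} anyway. The same argument applied verbatim with $\sup$ in place of $\inf$ would give the analogous homogeneity of $J^\circ$, should it be needed.
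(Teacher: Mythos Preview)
Your argument is correct and follows essentially the same route as the paper: use linearity of the dynamics to get $\xi(t,\lambda x,\sigma)=\lambda\,\xi(t,x,\sigma)$, then homogeneity of $c$ to get $J(\lambda x,\sigma)=\lambda^2 J(x,\sigma)$, and finally pass to the infimum. The only cosmetic difference is that the paper phrases the last step as an $\epsilon$-argument (taking a near-optimal signal for $x$ and showing $J^\star(\lambda x)\le\lambda^2 J^\star(x)+\epsilon$, then invoking symmetry in $\lambda\leftrightarrow\lambda^{-1}$), whereas you directly use that the infimum commutes with multiplication by the nonnegative constant $\lambda^2$.
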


\begin{proof}
It suffices to prove that for all $x\in\Re^2$, $\lambda\neq0$ and $\epsilon>0$, $J^\star(\lambda x)\leq \lambda^2J^\star(x)+\epsilon$.
Therefore, let $x\in\Re^2$, $\lambda\neq0$ and $\epsilon>0$.
Let $\sigma$ be a switching signal such that $J(x,\sigma)\leq J^\star(x)+\lambda^{-2}\epsilon$.
By the linearity, it holds that for all $t\in\Ne$, $\xi(t,\lambda x,\sigma)=\lambda\xi(t,x,\sigma)$.
Hence, $J(\lambda x,\sigma)=\lambda^2J(x,\sigma)$, so that $J^\star(\lambda x)\leq \lambda^2J^\star(x)+\epsilon$, concluding the proof.
\end{proof}

Given $\theta\in\Re$, we let $x(\theta)=[\cos(\theta),\sin(\theta)]^\top$.
We also let
\[
\ctilde(\theta)\coloneqq c(x(\theta))=\cos^2(\theta)+2\sin^2(\theta)=1+\sin^2(\theta).
\]
See Fig.~\ref{fig:ctilde} for an illustration.

\begin{lemma}\label{lem:prop-ctilde}
For all $x\in[\pi/8,3\pi/8]+\pi\Ze$, $\ctilde'(x)\geq\sqrt{2}/2$, and for all $x\in[-3\pi/8,-\pi/8]+\pi\Ze$, $\ctilde'(x)\leq-\sqrt{2}/2$.
\end{lemma}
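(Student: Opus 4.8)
The plan is to compute $\ctilde'$ in closed form and then reduce the two claims to an elementary estimate on the sine function. Since $\ctilde(\theta)=1+\sin^2(\theta)$, differentiation gives $\ctilde'(\theta)=2\sin(\theta)\cos(\theta)=\sin(2\theta)$. The key structural observation is that $\ctilde'$ is $\pi$-periodic, because $\sin(2(\theta+\pi))=\sin(2\theta+2\pi)=\sin(2\theta)$. Hence it suffices to establish the bounds on the base intervals $[\pi/8,3\pi/8]$ and $[-3\pi/8,-\pi/8]$, and the $+\pi\Ze$ shifts follow automatically.

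For the first interval, I would substitute $u=2\theta$, so that $\theta\in[\pi/8,3\pi/8]$ corresponds to $u\in[\pi/4,3\pi/4]$. On $[0,\pi]$ the function $\sin$ is nonnegative and concave, so on the subinterval $[\pi/4,3\pi/4]$ it attains its minimum at one of the endpoints; since $\sin(\pi/4)=\sin(3\pi/4)=\sqrt{2}/2$, we obtain $\sin(u)\geq\sqrt{2}/2$, i.e.\ $\ctilde'(\theta)=\sin(2\theta)\geq\sqrt{2}/2$. For the second interval, the same substitution sends $\theta\in[-3\pi/8,-\pi/8]$ to $u\in[-3\pi/4,-\pi/4]$, and either by applying the oddness of $\sin$ to the previous bound, or directly by the concavity argument on $[-\pi,0]$, we get $\sin(u)\leq-\sqrt{2}/2$, hence $\ctilde'(\theta)\leq-\sqrt{2}/2$.

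There is essentially no obstacle here: the statement is a routine computation once $\ctilde'$ has been identified with $\sin(2\,\cdot\,)$. The only points requiring a moment of care are to notice that the claimed inequalities are compatible with the $\pi$-periodicity of $\ctilde'$ (so that they genuinely hold on all of $[\pi/8,3\pi/8]+\pi\Ze$ and $[-3\pi/8,-\pi/8]+\pi\Ze$, and not merely on the base intervals), and that the constant $\sqrt{2}/2$ is exactly $\sin(\pi/4)$, which is precisely what makes the endpoints of these intervals the worst case for the bound.
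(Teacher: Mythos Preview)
Your argument is correct and matches the paper's approach exactly: the paper's proof consists of the single line ``Straightforward from $\ctilde'(\theta)=\sin(2\theta)$,'' and you have simply filled in the routine details behind that sentence.
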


\begin{proof}
Straightforward from $\ctilde'(\theta)=\sin(2\theta)$.
\end{proof}

For each $\theta\in\Re$, we let $\Delta\ctilde(\theta)\coloneqq\ctilde(\theta+\alpha)-\ctilde(\theta+\beta)$.
We also let $\mu\coloneqq-(\alpha+\beta)/2=\pi/4-\alpha=-\pi/4-\beta$.

\begin{lemma}\label{lem:prop-delta-ctilde}
It holds that $\Delta\ctilde(\theta)=\sin(2(\theta-\mu))$.
\end{lemma}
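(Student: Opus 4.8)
The plan is to just expand $\Delta\ctilde(\theta)$ using the explicit closed form $\ctilde(\theta) = 1 + \sin^2(\theta)$ derived just before the statement, and simplify using standard trigonometric identities to recognize the result as $\sin(2(\theta-\mu))$ with $\mu = -(\alpha+\beta)/2$.

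First I would write
\[
\Delta\ctilde(\theta) = \ctilde(\theta+\alpha) - \ctilde(\theta+\beta) = \sin^2(\theta+\alpha) - \sin^2(\theta+\beta).
\]
Then I would apply the factorization $\sin^2 u - \sin^2 v = \sin(u+v)\sin(u-v)$ (which follows from $\sin^2 u - \sin^2 v = (\sin u - \sin v)(\sin u + \sin v)$ and the sum-to-product formulas, or equivalently from $\sin^2 u = (1-\cos 2u)/2$). With $u = \theta+\alpha$ and $v = \theta+\beta$, this gives $u+v = 2\theta + \alpha + \beta = 2(\theta - \mu)$ and $u - v = \alpha - \beta$. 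Since $\beta = \alpha - \pi/2$, we have $\alpha - \beta = \pi/2$, so $\sin(\alpha-\beta) = 1$, and therefore $\Delta\ctilde(\theta) = \sin(2(\theta-\mu)) \cdot 1 = \sin(2(\theta-\mu))$, as claimed.

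**The main (and only) obstacle** is trivial here: there is essentially no obstacle, since this is a one-line trigonometric identity once one invokes $\ctilde(\theta) = 1 + \sin^2(\theta)$ and the difference-of-squares-of-sines formula. The only point requiring a moment's care is to use the defining relation $\beta = \alpha - \pi/2$ so that the factor $\sin(\alpha-\beta)$ collapses to $1$; without that, one would be left with an amplitude $\sin(\alpha-\beta)$ in front, and it is precisely the orthogonality of the two rotation angles (baked into the construction) that makes the amplitude equal to one, which will matter for controlling $\Delta\ctilde$ in the subsequent analysis.
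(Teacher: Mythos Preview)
Your proof is correct. Both your argument and the paper's reduce to a direct trigonometric simplification of $\sin^2(\theta+\alpha)-\sin^2(\theta+\beta)$; the paper expands each term via $\sin^2 x=(1-\cos 2x)/2$ (rewriting $\alpha=\pi/4-\mu$ and $\beta=-\pi/4-\mu$) and then subtracts, whereas you apply the product identity $\sin^2 u-\sin^2 v=\sin(u+v)\sin(u-v)$ in one shot and use $\alpha-\beta=\pi/2$ to collapse the amplitude. The routes differ only in which standard identity is invoked, and your version is marginally more direct; either way the computation is a one-liner and there is nothing to add.
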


\begin{proof}
We have that
\begin{align*}
\ctilde(\theta+\alpha) &= 1+\sin^2(\theta+\pi/4-\mu) \\
&= 1+(1-\cos(2(\theta-\mu)+\pi/2))/2 \\
&= 1+(1+\sin(2(\theta-\mu)))/2.
\end{align*}
Similarly,
\begin{align*}
\ctilde(\theta+\beta) &= 1+\sin^2(\theta-\pi/4-\mu) \\
&= 1+(1-\cos(2(\theta-\mu)-\pi/2))/2 \\
&= 1+(1-\sin(2(\theta-\mu)))/2.
\end{align*}
Hence, $\Delta\ctilde(\theta)=\sin(2(\theta-\mu))$.
\end{proof}

\begin{figure}
    \centering
    \begin{tikzpicture}
        \begin{axis}[
                width=0.85\linewidth,
                height=5cm,
                axis y line*=right,
                axis x line*=bottom,
                xmin=-pi,xmax=pi,
                xtick={-3*pi/4,-pi/2,-pi/4,0,pi/4,pi/2,3*pi/4},
                xticklabels={$\frac{-3\pi}{4}$,$\frac{-\pi}{2}$,$\frac{-\pi}{4}$,$0$,$\frac{\pi}{4}$,$\frac{\pi}{2}$,$\frac{3\pi}{4}$},
                xlabel={$\theta$},
                xlabel style={inner sep=0pt,outer sep=0pt},
                ymin=-1.2,
                ymax=1.2,
                ylabel={$\Delta\tilde{J}$},
                ylabel style={inner sep=0pt,outer sep=0pt},
                ylabel style={inner sep=0pt,outer sep=0pt},
                grid=major,
                legend pos=south east,
                clip=true
            ]
            \pgfmathsetmacro{\r}{0.6}
            \pgfmathsetmacro{\s}{0.6-pi/2}
            \addplot[domain=-pi:pi,samples=100,green!70!black]{sin(deg(x+\r))^2+0.03*max(sin(30*deg(x+\r)),cos(30*deg(x+\r)))-sin(deg(x+\s))^2-0.03*max(sin(30*deg(x+\s)),cos(30*deg(x+\s)))};
            \addlegendentry{$\Delta\Jtilde^\star$};
            \draw[purple,dashed] (axis cs:pi/4-0.6, -1.2) -- (axis cs:pi/4-0.6, 1.2);
            \node[fill=black,circle,minimum width=2.5pt,inner sep=0pt] at (axis cs:0.2, 0) {};
            \node at (axis cs:0.05, 0.1) {$\nu$};
            \node[fill=black,circle,minimum width=2.5pt,inner sep=0pt] at (axis cs:1.77, 0) {};
            \node at (axis cs:1.9, 0.1) {$\omega$};
            \node[fill=black,circle,minimum width=2.5pt,inner sep=0pt] at (axis cs:1.77-pi, 0) {};
            \node at (axis cs:1.3-pi, -0.13) {$\omega-\pi$};
        \end{axis}
        \begin{axis}[
                width=0.85\linewidth,
                height=5cm,
                axis y line*=left,
                axis x line=none, 
                xmin=-pi,xmax=pi,
                ymin=0.9,
                ymax=2.1,
                ylabel={$\ctilde$, $\Jtilde^\star$},
                ylabel style={inner sep=0pt,outer sep=0pt},
                legend pos=south west,
                clip=true,
            ]
            \addplot[domain=-pi:pi,samples=100,red]{1+sin(deg(x))^2};
            \addlegendentry{$\ctilde$};
            \addplot[domain=-pi:pi,samples=100,blue]{1+sin(deg(x))^2+0.03*max(sin(30*deg(x)),cos(30*deg(x)))};
            \addlegendentry{$\Jtilde^\star$};
            \addplot[domain=-pi:pi,samples=100,red]{1+sin(deg(x))^2};
        \end{axis}
    \end{tikzpicture}
    \vskip-5pt
    \caption{The function $\ctilde$ and a potential plot of $\Jtilde^\star$ (left y-axis), along with $\Delta\Jtilde^\star$ (right y-axis) for $\alpha=0.6$ and $\beta=\alpha-\pi/2$ ($\mu$ is the purple line).}
    \label{fig:ctilde}
\end{figure}

Finally, for each $\theta\in\Re$, we let $\Jtilde^\star(\theta)\coloneqq J^\star(x(\theta))$.

\begin{lemma}\label{lem:sys-dp}
For all $\theta\in\Re$, it holds that
\[
\Jtilde^\star(\theta)=\ctilde(\theta) + \rho^2\min\{\Jtilde^\star(\theta+\alpha),\Jtilde^\star(\theta+\beta)\}.
\]\vskip0pt
\end{lemma}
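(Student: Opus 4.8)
The plan is to derive the identity directly from the Bellman equation~\eqref{eq:dp-optimal}, specialized to the unit vector $x(\theta)$, using two facts already at hand: the geometric action of the two modes and the homogeneity of $J^\star$. First I would record the elementary observation that, since $A_1/\rho$ and $A_2/\rho$ are the rotation matrices of angles $\alpha$ and $\beta$ respectively, the modes act on $x(\theta)=[\cos(\theta),\sin(\theta)]^\top$ by $A_1x(\theta)=\rho\,x(\theta+\alpha)$ and $A_2x(\theta)=\rho\,x(\theta+\beta)$ for every $\theta\in\Re$ (a rotation maps a unit vector to the unit vector at the rotated angle, and the $\rho$ factor just rescales).

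Next I would evaluate~\eqref{eq:dp-optimal} at $x=x(\theta)$ and substitute these expressions, obtaining
\[
J^\star(x(\theta)) = c(x(\theta)) + \min\{J^\star(\rho\,x(\theta+\alpha)),\,J^\star(\rho\,x(\theta+\beta))\}.
\]
The first term equals $\ctilde(\theta)$ by definition. For the two terms inside the minimum I would invoke Lemma~\ref{lem:sys-homogeneous}: since $J^\star$ is homogeneous of degree $2$ and $\rho>0$, we have $J^\star(\rho\,x(\theta+\alpha))=\rho^2\Jtilde^\star(\theta+\alpha)$ and $J^\star(\rho\,x(\theta+\beta))=\rho^2\Jtilde^\star(\theta+\beta)$. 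Because $\rho^2>0$, the factor $\rho^2$ can be pulled out of the $\min$; substituting back, together with the definition $\Jtilde^\star(\theta)=J^\star(x(\theta))$, yields the claimed recursion.

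I do not anticipate any real obstacle here: the lemma is a bookkeeping consequence of the dynamic programming equation combined with the two structural properties of the construction already established—namely that each mode acts as a scaled rotation (turning matrix multiplication by $A_i$ into angle addition on the circle) and that $J^\star$ is positively homogeneous of degree $2$ (so that the scaling factor $\rho$ contributes a clean $\rho^2$). The only point deserving a line of justification is that the positivity of $\rho$ permits factoring $\rho^2$ out of the minimum, which is immediate.
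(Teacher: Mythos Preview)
Your proposal is correct and follows essentially the same approach as the paper: record that $A_ix(\theta)=\rho\,x(\theta+\gamma_i)$ for $\gamma_1=\alpha$, $\gamma_2=\beta$ since the $A_i$ are scaled rotations, plug into~\eqref{eq:dp-optimal}, and use Lemma~\ref{lem:sys-homogeneous} to extract the factor $\rho^2$. The paper's proof is just a terser statement of exactly this argument.
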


\begin{proof}
For any $\theta\in\Re$, it holds that $A_1x(\theta)=\rho x(\theta+\alpha)$ and $A_2x(\theta)=\rho x(\theta+\beta)$ (because $A_1$ and $A_2$ are scaled rotations).
Hence, we obtain the conclusion by using~\eqref{eq:dp-optimal} and Lemma~\ref{lem:sys-homogeneous}.
\end{proof}

For each $\theta\in\Re$, we let $\Delta\Jtilde^\star(\theta)\coloneqq\Jtilde^\star(\theta+\alpha)-\Jtilde^\star(\theta+\beta)$.
Denote $\delta=0.01$.
The following result is \emph{key} (see also Fig.~\ref{fig:ctilde} for an illustration):

\begin{lemma}\label{lem:delta-jtilde-well}
There exist
\begin{itemize}
    \item $\nu\in(\mu-\delta,\mu+\delta)$ such that $\Delta\Jtilde^\star(\nu)=0$;
    \item $\omega\in(\mu+\pi/2-\delta,\mu+\pi/2+\delta)$ such that $\Delta\Jtilde^\star(\omega)=0$.
\end{itemize}
Moreover, $\Delta\Jtilde^\star$ is
\begin{itemize}
    \item negative on $(\omega-\pi,\nu)+\pi\Ze$;
    \item positive on $(\nu,\omega)+\pi\Ze$.
\end{itemize}\vskip0pt
\end{lemma}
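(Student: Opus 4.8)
The plan is to establish the exact identity $\Delta\Jtilde^\star(\theta)=\sin(2(\theta-\mu))$ for every $\theta\in\Re$; once this is in hand, all four assertions are read off from the zeros and the sign of the sine, and $\delta$ (and the smallness of $\rho$) play no role whatsoever. The one ingredient I would set up first is that $\Jtilde^\star$ is $\pi$-periodic. Indeed, $x(\theta+\pi)=-x(\theta)$, and by Lemma~\ref{lem:sys-homogeneous} applied with $\lambda=-1$ we have $J^\star(-z)=J^\star(z)$ for all $z\in\Re^2$, hence $\Jtilde^\star(\theta+\pi)=J^\star(x(\theta+\pi))=J^\star(-x(\theta))=\Jtilde^\star(\theta)$. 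Since $\beta=\alpha-\pi/2$, that is, $2\beta=2\alpha-\pi$, this gives in particular $\Jtilde^\star(\theta+2\beta)=\Jtilde^\star(\theta+2\alpha)$.

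Next I would unfold the dynamic-programming recursion of Lemma~\ref{lem:sys-dp} one step, once at $\theta+\alpha$ and once at $\theta+\beta$:
\[
\Jtilde^\star(\theta+\alpha)=\ctilde(\theta+\alpha)+\rho^2\min\{\Jtilde^\star(\theta+2\alpha),\,\Jtilde^\star(\theta+\alpha+\beta)\},
\]
\[
\Jtilde^\star(\theta+\beta)=\ctilde(\theta+\beta)+\rho^2\min\{\Jtilde^\star(\theta+\alpha+\beta),\,\Jtilde^\star(\theta+2\beta)\}.
\]
By the periodicity identity $\Jtilde^\star(\theta+2\beta)=\Jtilde^\star(\theta+2\alpha)$, the two $\min$-terms coincide, so subtracting the two equations cancels the nonlinear part and leaves $\Delta\Jtilde^\star(\theta)=\ctilde(\theta+\alpha)-\ctilde(\theta+\beta)=\Delta\ctilde(\theta)$, which by Lemma~\ref{lem:prop-delta-ctilde} equals $\sin(2(\theta-\mu))$. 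I would then take $\nu=\mu$ and $\omega=\mu+\pi/2$: these lie in the required intervals for any $\delta>0$, and $\Delta\Jtilde^\star(\nu)=\sin 0=0$, $\Delta\Jtilde^\star(\omega)=\sin\pi=0$. Finally, $\theta\mapsto\sin(2(\theta-\mu))$ is $\pi$-periodic, strictly negative when $2(\theta-\mu)$ lies in $(-\pi,0)$ modulo $2\pi$ and strictly positive when it lies in $(0,\pi)$ modulo $2\pi$; translating, $\Delta\Jtilde^\star$ is negative on $(\omega-\pi,\nu)+\pi\Ze=(\mu-\pi/2,\mu)+\pi\Ze$ and positive on $(\nu,\omega)+\pi\Ze=(\mu,\mu+\pi/2)+\pi\Ze$, which is exactly the claim.

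The only substantive step is the cancellation of the $\min$-term in $\Delta\Jtilde^\star$, which rests on the two arithmetic facts $\alpha-\beta=\pi/2$ and $2\beta\equiv2\alpha\pmod{\pi}$ together with the $\pi$-periodicity of $\Jtilde^\star$; the rest is bookkeeping. If one preferred not to rely on this exact identity, a cruder perturbative route also works: from Lemma~\ref{lem:sys-dp} and the bounds $1\le\Jtilde^\star\le 2/(1-\rho^2)$ one gets $\lvert\Jtilde^\star(\theta)-\ctilde(\theta)\rvert\le 2\rho^2/(1-\rho^2)$ for all $\theta$, hence $\lvert\Delta\Jtilde^\star-\Delta\ctilde\rvert\le 4\rho^2/(1-\rho^2)<\sin(2\delta)$ for $\rho=\delta=0.01$; since $\Jtilde^\star$ is continuous (Lemma~\ref{lem:sys-lipschitz}) and $\Delta\ctilde=\sin(2(\theta-\mu))$ crosses zero with derivative bounded away from $0$ on $[\mu-\delta,\mu+\delta]$ and on $[\mu+\pi/2-\delta,\mu+\pi/2+\delta]$, the intermediate value theorem supplies the required zeros $\nu,\omega$ and the stated signs on the arcs bounded away from them — at the extra cost of a short argument ruling out further zeros of $\Delta\Jtilde^\star$ near $\mu$ and $\mu+\pi/2$, precisely the complication the exact identity avoids.
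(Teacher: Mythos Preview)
Your argument is correct and is a genuine simplification of the paper's proof. The key observation---that after one unfolding of the recursion at $\theta+\alpha$ and at $\theta+\beta$, the two $\min$-terms become $\min\{\Jtilde^\star(\theta+2\alpha),\Jtilde^\star(\theta+\alpha+\beta)\}$ and $\min\{\Jtilde^\star(\theta+\alpha+\beta),\Jtilde^\star(\theta+2\beta)\}$, which coincide because $\Jtilde^\star$ is $\pi$-periodic and $2\alpha-2\beta=\pi$---yields the exact identity $\Delta\Jtilde^\star\equiv\Delta\ctilde=\sin(2(\cdot-\mu))$. From this, $\nu=\mu$ and $\omega=\mu+\pi/2$ are explicit and the sign assertions are immediate.

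The paper does not notice this cancellation. Instead it treats $\Jtilde^\star$ only as a small perturbation of $\ctilde$ (via the Lipschitz bound $M<4.1$ and the smallness of $\rho$), and then argues in four separate steps: the sign of $\Delta\Jtilde^\star$ on the bulk intervals $S_\pm=[\mu\mp\pi/2\pm\delta,\mu\mp\delta]+\pi\Ze$ by comparing $\Delta\ctilde$ with the $O(\rho^2 M)$ remainder; the existence of $\nu$ and $\omega$ by the intermediate value theorem; and strict monotonicity of $\Delta\Jtilde^\star$ on the $\delta$-neighborhoods of $\mu$ and $\mu+\pi/2$ (again via a $\Delta\ctilde$-vs-$\rho^2 M$ comparison) to fix the sign there. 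This is essentially the ``cruder perturbative route'' you sketch at the end. Your exact identity makes all of that unnecessary and, as a bonus, delivers $\nu$ and $\omega$ explicitly (so that, for instance, $\nu+\alpha=\pi/4$ and $\nu+\beta=-\pi/4$ in the subsequent lemmas). The paper's approach has the minor virtue of being robust to variants of the construction in which $\alpha-\beta$ is not exactly $\pi/2$, but for the system actually under study your argument is strictly cleaner.
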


\begin{proof}
First of all, let us remind that by Lemma~\ref{lem:sys-lipschitz}, $\Jtilde^\star$ is Lipschitz continuous on $\Re$, with Lipschitz constant $M$, and for all $\theta\in\Re$, $\lvert\Jtilde^\star(\theta)\rvert\leq M$, where $M<4.1$.%
\footnote{Indeed, recall that $L=4$ and $\rho=0.01$.}

\emph{Negativity on the set $S_-\coloneqq[\mu-\pi/2+\delta,\mu-\delta]+\pi\Ze$:}
By Lemma~\ref{lem:prop-delta-ctilde}, we have that for all $\theta\in S_-$, $\Delta\ctilde(\theta)<-0.019$.
Hence, by Lemma~\ref{lem:sys-dp}, we obtain that for all $\theta\in S_-$,
\[
\Delta\Jtilde^\star(\theta)<-0.019+2\rho^2M<-0.018<0.
\]

\emph{Positivity on the set $S_+\coloneqq[\mu+\delta,\mu+\pi/2-\delta]+\pi\Ze$:}
By Lemma~\ref{lem:prop-delta-ctilde}, we have that for all $\theta\in S_+$, $\Delta\ctilde(\theta)>0.019$.
Hence, by Lemma~\ref{lem:sys-dp}, we obtain that for all $\theta\in S_+$,
\[
\Delta\Jtilde^\star(\theta)>0.019-2\rho^2M>0.018>0.
\]

\emph{Existence of $\nu$:}
From the above, we know that $\Delta\Jtilde^\star(\mu-\delta)<0$ and $\Delta\Jtilde^\star(\mu+\delta)>0$.
Hence, by continuity of $\Delta\Jtilde^\star$, there is $\nu\in(\mu-\delta,\mu+\delta)$ such that $\Delta\Jtilde^\star(\nu)=0$.

\emph{Existence of $\omega$:}
From the above, we know that $\Delta\Jtilde^\star(\mu+\pi/2-\delta)>0$ and $\Delta\Jtilde^\star(\mu+\pi/2+\delta)<0$.
Hence, there is $\omega\in(\mu+\pi/2-\delta,\mu+\pi/2+\delta)$ such that $\Delta\Jtilde^\star(\omega)=0$.

\emph{Strict increase on $S_\nearrow\coloneqq[\mu-\delta,\mu+\delta]+\pi\Ze$:}
We show that $\Delta\Jtilde^\star$ is increasing on $S_\nearrow$, which will imply that $\Delta\Jtilde^\star$ is negative on $[\mu-\delta,\nu)+\pi\Ze$ and positive on $(\nu,\mu+\delta]+\pi\Ze$.
Let $k\in\Ze$ and $\mu-\delta+k\pi\leq\theta_1<\theta_2\leq\mu+\delta+k\pi$.
By Lemma~\ref{lem:prop-delta-ctilde}, we have that $\Delta\ctilde(\theta_2)-\Delta\ctilde(\theta_1)\geq\theta_2-\theta_1$ (mean value theorem).
Hence, by Lemma~\ref{lem:sys-dp},
\[
\Delta\Jtilde^\star(\theta_2)-\Delta\Jtilde^\star(\theta_1)\geq(1-\rho^2M)(\theta_2-\theta_1)>0.
\]

\emph{Strict decrease on $S_\searrow\coloneqq[\mu+\pi/2-\delta,\mu+\pi/2+\delta]+\pi\Ze$:}
We show that $\Delta\Jtilde^\star$ is decreasing on $S_\searrow$, which will imply that $\Delta\Jtilde^\star$ is positive on $[\mu+\pi/2-\delta,\omega)+\pi\Ze$ and negative on $(\omega,\mu+\pi/2+\delta]+\pi\Ze$.
Let $k\in\Ze$ and $\mu+\pi/2-\delta+k\pi\leq\theta_1<\theta_2\leq\mu+\pi/2+\delta+k\pi$.
By Lemma~\ref{lem:prop-delta-ctilde}, we have that $\Delta\ctilde(\theta_2)-\Delta\ctilde(\theta_1)\leq\theta_1-\theta_2$ (mean value theorem).
Hence, by Lemma~\ref{lem:sys-dp},
\[
\Delta\Jtilde^\star(\theta_2)-\Delta\Jtilde^\star(\theta_1)\geq(\rho^2M-1)(\theta_2-\theta_1)<0.
\]

This concludes the proof.
\end{proof}

From now on, we let $\nu$ and $\omega$ be as Lemma~\ref{lem:delta-jtilde-well}.
We note that $[\nu+\beta,\nu+\alpha]\subseteq(\omega-\pi,\omega).$\footnote{Since $\nu+\alpha<\mu+3\pi/8+\delta<\mu+\pi/2-\delta$ and $\beta>\mu-3\pi/8-\delta>\mu-\pi/2+\delta$ (recall that $\delta=0.01$).}
Similarly, $[\mu-\delta,\mu+\delta]\subseteq(\omega-\pi,\omega)$.
We show that $\Jtilde^\star$ is not differentiable at $\nu$.

\begin{lemma}\label{lem:jtilde-non-differentiable}
The function $\Jtilde^\star$ is not differentiable at $\nu$.
\end{lemma}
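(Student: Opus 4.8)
The plan is to exploit the interaction between the Bellman recursion (Lemma~\ref{lem:sys-dp}) and the sign-change structure of $\Delta\Jtilde^\star$ at $\nu$ established in Lemma~\ref{lem:delta-jtilde-well}. The key observation is that near $\theta=\nu$, the minimizer in $\min\{\Jtilde^\star(\theta+\alpha),\Jtilde^\star(\theta+\beta)\}$ switches: since $\Delta\Jtilde^\star(\theta)=\Jtilde^\star(\theta+\alpha)-\Jtilde^\star(\theta+\beta)$ is negative on $(\omega-\pi,\nu)+\pi\Ze$ and positive on $(\nu,\omega)+\pi\Ze$, we have that for $\theta$ slightly less than $\nu$ the minimum is attained at $\theta+\alpha$, while for $\theta$ slightly greater than $\nu$ it is attained at $\theta+\beta$. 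Thus on a one-sided neighborhood $\Jtilde^\star(\theta)=\ctilde(\theta)+\rho^2\Jtilde^\star(\theta+\alpha)$ and on the other $\Jtilde^\star(\theta)=\ctilde(\theta)+\rho^2\Jtilde^\star(\theta+\beta)$. If $\Jtilde^\star$ were differentiable at $\nu$, the one-sided derivatives would agree, forcing
\[
\rho^2\,\partial_-\Jtilde^\star(\nu+\alpha) = \rho^2\,\partial_+\Jtilde^\star(\nu+\beta),
\]
where I would need to argue (via the Lipschitz property and a squeeze argument on difference quotients) that the relevant one-sided derivatives of $\Jtilde^\star$ at the shifted points exist when approaching along the appropriate side. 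The goal is then to derive a contradiction by showing these two one-sided slopes cannot be equal.

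First I would make precise the ``minimizer switching'' claim: for $\theta\in(\nu-\delta',\nu)$ we have $\Delta\Jtilde^\star(\theta)<0$ hence $\Jtilde^\star(\theta)=\ctilde(\theta)+\rho^2\Jtilde^\star(\theta+\alpha)$, and symmetrically for $\theta\in(\nu,\nu+\delta')$ we get $\Jtilde^\star(\theta)=\ctilde(\theta)+\rho^2\Jtilde^\star(\theta+\beta)$; here $\delta'$ is chosen small enough that the interval lies inside $(\omega-\pi,\nu)+\pi\Ze$ resp.\ $(\nu,\omega)+\pi\Ze$, which is possible by the footnote observation that $[\mu-\delta,\mu+\delta]\subseteq(\omega-\pi,\omega)$ and $\nu\in(\mu-\delta,\mu+\delta)$. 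Second, assuming for contradiction that $\Jtilde^\star$ is differentiable at $\nu$ with derivative $D$, I would take the limit of the difference quotient $(\Jtilde^\star(\nu+h)-\Jtilde^\star(\nu))/h$ from both sides. Using continuity of $\ctilde'$ and the two expressions above, the left derivative equals $\ctilde'(\nu)+\rho^2\,(\text{left derivative of }\Jtilde^\star\text{ at }\nu+\alpha)$ and the right derivative equals $\ctilde'(\nu)+\rho^2\,(\text{right derivative of }\Jtilde^\star\text{ at }\nu+\beta)$. Crucially, at $\nu+\alpha$ and $\nu+\beta$ the function $\Jtilde^\star$ \emph{does} admit the needed one-sided derivative: since $\nu+\alpha\in S_+$'s interior analog and $\nu+\beta$ likewise, I can iterate the Bellman relation once more or invoke that on $[\nu+\beta,\nu+\alpha]$ the relevant minimizer is locally fixed (because $[\nu+\beta,\nu+\alpha]\subseteq(\omega-\pi,\omega)$, so $\Delta\Jtilde^\star$ has constant sign there, making $\Jtilde^\star$ satisfy a single-branch recursion on that whole interval and hence inherit whatever differentiability $\ctilde$ plus a shifted copy of $\Jtilde^\star$ has).

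The third and decisive step is the contradiction. Because $\Jtilde^\star$ on $[\nu+\beta,\nu+\alpha]$ obeys a \emph{single} recursion $\Jtilde^\star(\theta)=\ctilde(\theta)+\rho^2\Jtilde^\star(\theta+\gamma)$ (for the locally selected $\gamma\in\{\alpha,\beta\}$, say $\gamma=\alpha$ just right of $\nu+\beta$ and $\gamma=\beta$ just left of $\nu+\alpha$; more carefully, one should track which branch is active on each subinterval using Lemma~\ref{lem:delta-jtilde-well} and the location of $\nu+\alpha+\beta$ etc.), the differentiability of $\Jtilde^\star$ at $\nu$ would propagate: equality of one-sided derivatives at $\nu$ translates, after subtracting $\ctilde'(\nu)$ and dividing by $\rho^2$, into equality of a left-derivative and a right-derivative of $\Jtilde^\star$ at two \emph{distinct} points $\nu+\alpha$ and $\nu+\beta$ that differ by $\pi/2$. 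Iterating this self-similar reduction and using that $\alpha/\pi$ is irrational (so the orbit $\{\nu+k\alpha\bmod\pi\}$ is dense and never returns, hence the chain of forced derivative-equalities never closes up), together with the fact that $\Delta\Jtilde^\star$ strictly changes sign at $\nu$ — i.e.\ the two branches genuinely differ in slope there because $\Jtilde^\star(\cdot+\alpha)$ and $\Jtilde^\star(\cdot+\beta)$ are not tangent at $\nu$ — produces the contradiction. I expect the main obstacle to be rigorously justifying the existence of the required one-sided derivatives of $\Jtilde^\star$ at the shifted points $\nu+\alpha,\nu+\beta$ (a priori $\Jtilde^\star$ is only Lipschitz, hence differentiable only a.e.), and this is exactly why the interval-fixing observation $[\nu+\beta,\nu+\alpha]\subseteq(\omega-\pi,\omega)$ is highlighted before the lemma: it should let us write $\Jtilde^\star$ there as $\ctilde$ plus $\rho^2$ times a single shifted copy, reducing the needed regularity to that of $\ctilde$ and controlling the ``jump'' in the derivative at $\nu$ via the strict sign change of $\Delta\Jtilde^\star$.
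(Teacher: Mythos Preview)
Your opening observation---that the minimizer in $\min\{\Jtilde^\star(\theta+\alpha),\Jtilde^\star(\theta+\beta)\}$ switches at $\nu$---is correct and is indeed the starting point. But the argument you build on it has a concrete error and a genuine gap.

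The concrete error: you claim that on $[\nu+\beta,\nu+\alpha]$ the function $\Delta\Jtilde^\star$ has constant sign because this interval lies in $(\omega-\pi,\omega)$. That is false: since $\beta<0<\alpha$, the point $\nu$ itself lies in $(\nu+\beta,\nu+\alpha)$, and by Lemma~\ref{lem:delta-jtilde-well} $\Delta\Jtilde^\star$ changes sign precisely at $\nu$. So the ``single-branch recursion'' you invoke on that interval does not hold, and the mechanism you propose for extracting one-sided derivatives of $\Jtilde^\star$ at $\nu+\alpha$ and $\nu+\beta$ collapses.

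The deeper gap is that your contradiction is circular. To conclude non-differentiability at $\nu$ you need the two branches $\theta\mapsto\Jtilde^\star(\theta+\alpha)$ and $\theta\mapsto\Jtilde^\star(\theta+\beta)$ to have distinct one-sided slopes at $\nu$; your iteration only pushes this question to shifted points where you still know nothing beyond Lipschitz bounds, and the irrationality of $\alpha/\pi$ yields no contradiction---an infinite chain of derivative equalities at distinct points is not absurd in itself. The assertion that the branches ``are not tangent at $\nu$'' is exactly the statement to be proved. The paper's proof bypasses all of this by working \emph{quantitatively} with difference quotients rather than derivatives: writing $\Jtilde^\star(\theta+\alpha)=\ctilde(\theta+\alpha)+\rho^2(\cdot)$ with the $\rho^2$-term Lipschitz of constant $\rho^2 M<0.001$, and using the explicit bound $\ctilde'\geq\sqrt{2}/2$ near $\nu+\alpha$ from Lemma~\ref{lem:prop-ctilde}, one gets $h(\nu)-h(\theta)\geq\tfrac12(\nu-\theta)$ for $\theta$ just left of $\nu$; symmetrically $\ctilde'\leq-\sqrt{2}/2$ near $\nu+\beta$ gives $h(\theta)-h(\nu)\leq-\tfrac12(\theta-\nu)$ for $\theta$ just right of $\nu$. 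The left and right difference quotients of $h$ are then separated by at least $1$, and no differentiability of $\Jtilde^\star$ at any shifted point is ever needed. The smallness of $\rho$ is what makes this work, and it is the idea missing from your proposal.
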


\begin{proof}
By Lemma~\ref{lem:sys-dp} and $\ctilde$ being differentiable, it suffices to show that the function
\[
h:\theta\mapsto\min\{\Jtilde^\star(\theta+\alpha),\Jtilde^\star(\theta+\beta)\}
\]
is not differentiable at $\nu$.
For that, we will show that
\begin{itemize}
    \item[a)] for $\mu-\delta\leq\theta<\nu$, $h(\nu)-h(\theta)\geq\frac{1}2(\nu-\theta)$,
    \item[b)] for $\nu<\theta\leq\mu+\delta$, $h(\theta)-h(\nu)\leq\frac{-1}2(\theta-\nu)$.
\end{itemize}
This will imply that $h$ is not differentiable at $\nu$ since
\[
\limsup_{\theta\to\nu} \frac{h(\nu)-h(\theta)}{\nu-\theta}\geq\frac{1}2>\frac{-1}2\geq\liminf_{\theta\to\nu} \frac{h(\nu)-h(\theta)}{\nu-\theta}.
\]

\emph{Proof of a):}
Let $\theta\in[\mu-\delta,\nu)$.
By Lemma~\ref{lem:delta-jtilde-well}, it holds that $\Delta\Jtilde^\star(\theta)<0$, so that $\Jtilde^\star(\theta+\alpha)<\Jtilde^\star(\theta+\beta)$, and thus $h(\theta)=\Jtilde^\star(\theta+\alpha)$.
Similarly, $h(\nu)=\Jtilde^\star(\nu+\alpha)$.
We note that
\[
\pi/8\leq\pi/4-\delta\leq\theta+\alpha<\nu+\alpha\leq\pi/4+\delta\leq3\pi/8.
\]
(recall that $\mu=\pi/4-\alpha$).
Hence, by Lemma~\ref{lem:prop-ctilde}, we get that $\ctilde(\nu+\alpha)-\ctilde(\theta+\alpha)\geq\sqrt2/2(\nu-\theta)$ (mean value theorem).
Thus, by Lemmas~\ref{lem:sys-dp} and~\ref{lem:sys-lipschitz},
\[
\Jtilde^\star(\nu+\alpha)-\Jtilde^\star(\theta+\alpha)\geq(\sqrt2/2-\rho^2M)(\nu-\theta)\geq\frac12(\nu-\theta),
\]
which gives $h(\nu)-h(\theta)\geq\frac12(\nu-\theta)$.

\emph{Proof of b):}
Similar to the above: if $\theta\in(\nu,\mu+\delta]$, then we have that $h(\theta)=\Jtilde^\star(\theta+\beta)$, $h(\nu)=\Jtilde^\star(\nu+\beta)$ and
\[
-3\pi/8\leq-\pi/4-\delta\leq\nu+\beta<\theta+\beta\leq-\pi/4+\delta\leq-\pi/8.
\]
(recall that $\mu=-\pi/4-\beta$).
Hence, by Lemmas~\ref{lem:prop-ctilde},~\ref{lem:sys-dp} and~\ref{lem:sys-lipschitz}, we obtain $h(\theta)-h(\nu)\leq-\frac12(\theta-\nu)$.

This concludes the proof.
\end{proof}

Let us now consider the dynamical system $T$ on the set $I\coloneqq[\nu+\beta,\nu+\alpha)$ defined by%
\footnote{We sometimes note $T(\theta)=T\theta$ for convenience.}
\[
T:I\to I,\quad T(\theta) = \left\lbrace \begin{array}{ll}
\theta + \alpha & \text{if $\theta<\nu$,} \\
\theta + \beta & \text{if $\theta\geq\nu$.}
\end{array} \right.
\]
This system is an \emph{Interval Exchange Map (IEM)}~\cite{keane1975interval}, because it translates the interval $I_1\coloneqq[\nu+\beta,\nu)$ to the interval $I_1'\coloneqq[\nu+\beta+\alpha,\nu+\alpha)$, and the interval $I_2\coloneqq[\nu,\nu+\alpha)$ to $I_2'\coloneqq[\nu+\beta,\nu+\alpha+\beta)$.
These systems are known to be topologically transitive (existence of a dense orbit) when the length of $I_1$ (or $I_2$) is an irrational multiple of the length of $I$~\cite{keane1975interval}.
See Fig.~\ref{fig:iem} for an illustration.

\begin{figure}
    \centering
    \begin{tikzpicture}
        \pgfmathsetmacro{\xmin}{0.0}
        \pgfmathsetmacro{\xmax}{7.0}
        \pgfmathsetmacro{\xmid}{\xmax-0.6*(\xmax-\xmin)/(pi/2)}
        \pgfmathsetmacro{\ylab}{-0.5}
        \draw[-latex] (\xmin-0.1,0) -- (\xmax+0.3,0);
        \draw (\xmid,-0.1) -- (\xmid,0.1);
        \node at (\xmid,\ylab) {$\nu$};
        \node at (\xmin,0) {$[$};
        \node at (\xmin,\ylab) {$\nu+\beta$};
        \node at (\xmax,0) {$)$};
        \node at (\xmax,\ylab) {$\nu+\alpha$};
        \pgfmathsetmacro{\xrec}{\xmid}
        \pgfmathsetmacro{\dxp}{\xmax-\xmid}
        \pgfmathsetmacro{\dxm}{\xmin-\xmid}
        \pgfmathsetmacro{\NN}{15}
        \foreach \i in {1,...,\NN}
        {
            \pgfmathsetmacro{\cond}{(\xrec>=\xmid)}
            \pgfmathparse{\xrec+\cond*\dxm+(1-\cond)*\dxp}
            \xdef\xrec{\pgfmathresult} 
            \node[fill=black,circle,minimum width=3pt,inner sep=0pt] (p-\i) at (\xrec,0) {};
        }
        \coordinate (p-0) at (\xmid,0);
        \pgfmathtruncatemacro{\MM}{\NN-1}
        \foreach \i in {0,...,\MM}
        {
            \pgfmathtruncatemacro{\j}{\i+1}
            \draw[blue] (p-\i) edge[-latex,bend left] (p-\j);
        }
    \end{tikzpicture}
    \vskip-5pt
    \caption{The truncated backward trajectory $\{T^{-1}\nu,\ldots,T^{-15}\nu\}$ (black dots) of the map $T$ for $\alpha=0.6$, $\beta=\alpha-\pi/2$ and $\nu=0.1$.
    The blue arrows represent transitions by $T^{-1}$.}
    \label{fig:iem}
\end{figure}

\begin{lemma}\label{lem:dense-orbit}
For any $\theta\in I$, the backward orbit of $T$ from $\theta$, i.e., the set $\{T^{-1}\theta,T^{-2}\theta,T^{-3}\theta,\ldots\}$, is dense in $I$.
\end{lemma}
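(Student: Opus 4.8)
The plan is to recognize the map $T$ as a disguised irrational rotation of a circle and then invoke the classical density of its orbits. The starting point is that $I$ has length exactly $(\nu+\alpha)-(\nu+\beta)=\alpha-\beta=\pi/2$, so the canonical projection $q\colon\Re\to\Re/(\pi/2)\Ze$ restricts to a \emph{bijection} $q|_I\colon I\to\Re/(\pi/2)\Ze$. Let $R\colon\Re/(\pi/2)\Ze\to\Re/(\pi/2)\Ze$ be the rotation $R(y)=y+\alpha$.

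The key observation is that \emph{on both branches} the definition of $T$ adds $\alpha$ modulo $\pi/2$: for $\theta\in[\nu+\beta,\nu)$ we have $T\theta=\theta+\alpha$, while for $\theta\in[\nu,\nu+\alpha)$ we have $T\theta=\theta+\beta=\theta+\alpha-\pi/2$; in both cases $q(T\theta)=q(\theta+\alpha)=R(q(\theta))$. Thus $q|_I$ conjugates $T$ to $R$, i.e.\ $q\circ T=R\circ q$ on $I$; since $T\colon I\to I$ and $R$ are bijections, this yields $q\circ T^{-k}=R^{-k}\circ q$ on $I$ for every $k\in\Ne_{>0}$. Next I would use irrationality: since $\alpha/\pi\notin\Qe$, also $2\alpha/\pi\notin\Qe$, so $\alpha$ is an irrational multiple of the period $\pi/2$ and $R$ is an irrational rotation. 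It is classical (pigeonhole / Weyl equidistribution; see also~\cite{keane1975interval}) that every backward orbit of an irrational rotation is dense in the circle, i.e.\ for $\theta\in I$ the set $D\coloneqq\{R^{-k}(q(\theta)):k\in\Ne_{>0}\}$ is dense in $\Re/(\pi/2)\Ze$.

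Finally I would transfer density back to $I$. Given a nonempty open $U\subseteq I$, it contains an open interval $(a,b)$ with $\nu+\beta\le a<b\le\nu+\alpha$; since $q$ is an open map, $q((a,b))$ is a nonempty open arc, so by density of $D$ there is $k\in\Ne_{>0}$ with $R^{-k}(q(\theta))\in q((a,b))$. Because $q|_I$ is a bijection and $q(T^{-k}\theta)=R^{-k}(q(\theta))$, the point $T^{-k}\theta$ is the unique $q$-preimage of $R^{-k}(q(\theta))$ in $I$, hence lies in $(a,b)\subseteq U$. Therefore $\{T^{-k}\theta:k\in\Ne_{>0}\}$ meets every nonempty open subset of $I$, i.e.\ it is dense. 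I do not expect a genuine obstacle here: the substantive ingredient is the classical density theorem for irrational rotations, and the only care needed is the bookkeeping at the half-open ``seam'' of $I$ (ensuring $q|_I$ is honestly a bijection and that the small intervals $(a,b)$ do not wrap around the circle).
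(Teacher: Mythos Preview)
Your proof is correct and follows essentially the same approach as the paper: both conjugate $T$ to the irrational rotation by $\alpha$ on a circle of circumference $\pi/2$ and invoke the classical density of irrational-rotation orbits. The only cosmetic difference is that the paper realizes the circle concretely as $[0,\pi/2)$ via the shift $\theta\mapsto\theta-(\nu+\beta)$, whereas you use the quotient $\Re/(\pi/2)\Ze$ and the bijection $q|_I$.
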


\begin{proof}
Consider the map
\[
S:[0,\pi/2)\to[0,\pi/2),\quad S(\theta)=\theta+\alpha\!\mod\pi/2.
\]
The map $S$ is an irrational rotation (recall that $\alpha/\pi\notin\Qe$).
Hence, its forward and backward orbits are dense in $[0,\pi/2)$.
Now, observe that
\begin{equation}\label{eq:s-rel-t}
T(\theta)=S(\theta-\phi)+\phi, \quad\text{where}\quad \phi=\nu+\beta.
\end{equation}

\emph{Proof of~\eqref{eq:s-rel-t}:}
If $\theta\in[\nu+\beta,\nu)$, then $\theta-\phi\in[0,-\beta)$ and $\theta-\phi+\alpha\in[0,\pi/2)$.
Thus, $S(\theta-\phi)=\theta-\phi+\alpha$ and $T(\theta)=\theta+\alpha=S(\theta-\phi)+\phi$.
Similarly, if $\theta\in[\nu,\nu+\alpha)$, then $\theta-\phi\in[-\beta,\pi/2)$ and $\theta-\phi+\alpha\in[\pi/2,\pi/2+\alpha)$, so that $S(\theta-\phi)=\theta-\phi+\alpha-\pi/2=\theta-\phi+\beta$ and $T(\theta)=\theta+\beta=S(\theta-\phi)+\phi$.

The relation~\eqref{eq:s-rel-t} between $T$ and $S$ shows that all backward orbits of $T$ are dense in $I$.
\end{proof}

\begin{lemma}\label{lem:orbit-interior}
The backward orbit of $T$ from $\nu$, i.e., the set $\{T^{-1}\nu,T^{-2}\nu,T^{-3}\nu,\ldots\}$, is contained in $I\setminus\{\nu\}$.
\end{lemma}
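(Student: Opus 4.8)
The plan is to invoke once more the conjugacy~\eqref{eq:s-rel-t} between $T$ and the irrational rotation $S$, now in order to exclude periodic points. First I would note that $T$ is a \emph{bijection} of $I$ onto itself: with the notation of the text, $I_2'=[\nu+\beta,\nu+\alpha+\beta)$ and $I_1'=[\nu+\alpha+\beta,\nu+\alpha)$ (the left endpoint of $I_1'$ and the right endpoint of $I_2'$ being literally the same expression), so $\{I_1',I_2'\}$ partitions $I$; since $T$ sends $I_1$ onto $I_1'$ and $I_2$ onto $I_2'$, it is a bijection of $I$. Consequently $T^{-1}$ is a well-defined self-map of $I$, the backward orbit $\{T^{-k}\nu:k\ge1\}$ is a well-defined subset of $I$ (note $\nu\in I$ because $\beta<0<\alpha$), and it only remains to show that $\nu$ is not a periodic point of $T$, i.e., $T^k\nu\ne\nu$ for all $k\ge1$ --- equivalently $T^{-k}\nu\ne\nu$ for all $k\ge1$.

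To establish this I would rewrite~\eqref{eq:s-rel-t} as $T=\psi\circ S\circ\psi^{-1}$ on $I$, where $\psi(\theta)=\theta+\phi$, $\phi=\nu+\beta$, is the (bijective) translation carrying the domain $[0,\pi/2)$ of $S$ onto $I$ (indeed $\beta+\pi/2=\alpha$). Iterating gives $T^k=\psi\circ S^k\circ\psi^{-1}$, so $T^k\nu=\nu$ would force $S^k(\nu-\phi)=\nu-\phi$, i.e., $\nu-\phi=-\beta\in(\pi/8,3\pi/8)\subseteq[0,\pi/2)$ would be a periodic point of $S$. But an irrational rotation has no periodic point: $S^k(x)=x$ implies $k\alpha\in\frac{\pi}{2}\Ze$, hence $\alpha/\pi\in\Qe$, contradicting the choice of $\alpha$. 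Therefore $T^k\nu\ne\nu$ for every $k\ge1$, which is exactly the claim.

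I do not expect a genuine obstacle in this lemma; the only points needing a little care are the (easy) verification that $T$ is honestly a bijection of $I$---so that the backward orbit is defined and automatically lies in $I$---and the correct matching of the domains of $S$ and $T$ through the translation $\psi$.
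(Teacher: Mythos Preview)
Your proof is correct. The paper's own proof is a one-liner that takes a slightly different route: it simply observes that if $T^{-k}\nu=\nu$ for some $k\ge1$ then the backward orbit is periodic, hence finite, hence not dense in $I$, contradicting Lemma~\ref{lem:dense-orbit}. So the paper leverages the already-established density of the backward orbit as a black box, whereas you go back to the conjugacy~\eqref{eq:s-rel-t} and argue aperiodicity of the irrational rotation $S$ directly. Your argument is a bit longer but more self-contained (it uses only the weaker fact that irrational rotations have no periodic points, rather than full minimality), and it has the incidental merit of spelling out that $T$ is a bijection of $I$, which the paper uses implicitly. Either way the content is the same: no periodic point for $T$ at $\nu$.
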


\begin{proof}
By contradiction, if $T^{-k}\nu=\nu$ for some $k\in\Ne_{>0}$, then the backward orbit is periodic (thus not dense), contradicting Lemma~\ref{lem:dense-orbit}.
\end{proof}

Now, we prove our second \emph{key} result, namely that $\Jtilde^\star$ is not differentiable at any $\theta$ in the backward orbit of $\nu$:

\begin{lemma}\label{lem:jtilde-non-differentiable-orbit}
For every $k\in\Ne$, $\Jtilde^\star$ is not differentiable at $T^{-k}\nu$.
\end{lemma}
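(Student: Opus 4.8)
The plan is to prove the statement by induction on $k$, transferring non-differentiability \emph{backward} along the orbit of $T$ by means of the dynamic programming relation of Lemma~\ref{lem:sys-dp}. For the base case $k=0$ there is nothing to prove: it is exactly Lemma~\ref{lem:jtilde-non-differentiable}. For the inductive step, suppose $\Jtilde^\star$ is not differentiable at $\theta'\coloneqq T^{-k}\nu$, and set $\theta\coloneqq T^{-(k+1)}\nu$, so that $T\theta=\theta'$. By Lemma~\ref{lem:orbit-interior}, $\theta\in I\setminus\{\nu\}$; in particular $\theta\neq\nu$, and by the inclusion $[\nu+\beta,\nu+\alpha]\subseteq(\omega-\pi,\omega)$ noted right after Lemma~\ref{lem:delta-jtilde-well}, we also have $\theta\in(\omega-\pi,\omega)$.

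The key local observation is that near $\theta$ the minimum defining $h(\psi)\coloneqq\min\{\Jtilde^\star(\psi+\alpha),\Jtilde^\star(\psi+\beta)\}$ is attained by a fixed branch, namely the one dictated by $T$. Indeed, by Lemma~\ref{lem:delta-jtilde-well}, $\Delta\Jtilde^\star$ is continuous, strictly negative on $(\omega-\pi,\nu)$ and strictly positive on $(\nu,\omega)$. Since $\theta\in(\omega-\pi,\omega)\setminus\{\nu\}$, there is an open neighborhood $U$ of $\theta$ on which $\Delta\Jtilde^\star$ keeps a fixed nonzero sign: negative on $U$ if $\theta<\nu$, positive on $U$ if $\theta>\nu$. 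Writing $\tau\coloneqq\alpha$ in the first case and $\tau\coloneqq\beta$ in the second, this says exactly that $h(\psi)=\Jtilde^\star(\psi+\tau)$ for every $\psi\in U$; moreover, by the definition of $T$, we have $\theta+\tau=T\theta=\theta'$.

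Now apply Lemma~\ref{lem:sys-dp}: for every $\psi\in U$, $\Jtilde^\star(\psi)=\ctilde(\psi)+\rho^2\Jtilde^\star(\psi+\tau)$. The function $\ctilde(\psi)=1+\sin^2(\psi)$ is differentiable and $\rho\neq0$, so $\Jtilde^\star$ is differentiable at $\theta$ if and only if $\psi\mapsto\Jtilde^\star(\psi+\tau)$ is differentiable at $\theta$, i.e., if and only if $\Jtilde^\star$ is differentiable at $\theta+\tau=\theta'=T^{-k}\nu$. By the induction hypothesis the latter fails, hence $\Jtilde^\star$ is not differentiable at $\theta=T^{-(k+1)}\nu$, which completes the induction.

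The only genuinely substantive step is the local branch-selection claim of the second paragraph (that $h$ agrees with $\Jtilde^\star(\,\cdot\,+\tau)$ on a whole neighborhood of $\theta$, not merely at $\theta$); everything else is routine bookkeeping, using that $\Jtilde^\star$ is defined on all of $\Re$ so that $\psi+\tau$ need not lie in $I$. That step reduces directly to the sign information already established in Lemma~\ref{lem:delta-jtilde-well} together with the inclusion $[\nu+\beta,\nu+\alpha]\subseteq(\omega-\pi,\omega)$; the one point requiring care is that we must stay away from the point $\nu$, where $\Delta\Jtilde^\star$ vanishes, and this is guaranteed precisely because the backward orbit of $\nu$ avoids $\nu$ (Lemma~\ref{lem:orbit-interior}).
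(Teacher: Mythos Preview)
Your proof is correct and follows essentially the same approach as the paper: induction on $k$ with base case Lemma~\ref{lem:jtilde-non-differentiable}, using Lemma~\ref{lem:orbit-interior} to ensure $T^{-(k+1)}\nu\neq\nu$, Lemma~\ref{lem:delta-jtilde-well} to fix the active branch of the minimum on a neighborhood, and Lemma~\ref{lem:sys-dp} to transfer non-differentiability from $T^{-k}\nu$ to $T^{-(k+1)}\nu$. The paper splits explicitly into the two cases $\eta\in[\nu+\beta,\nu)$ and $\eta\in(\nu,\nu+\alpha)$, whereas you encode this with your parameter $\tau\in\{\alpha,\beta\}$, but the argument is the same.
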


\begin{proof}
The proof is by induction on $k$.
For $k=0$, the result follows from Lemma~\ref{lem:jtilde-non-differentiable}.
Now, we show it for $k\in\Ne_{>0}$ assuming it holds for $k-1$.
Let $\eta=T^{-k}\nu$.
By Lemma~\ref{lem:orbit-interior}, it holds that $\eta\in[\nu+\beta,\nu)\cup(\nu,\nu+\alpha)$.
We consider two cases.

i) Assume that $\eta\in[\nu+\beta,\nu)$.
By Lemmas~\ref{lem:sys-dp} and~\ref{lem:delta-jtilde-well}, there is a neighborhood $\calV_\eta$ of $\eta$ such that for all $\theta\in\calV_\eta$,
\[
\Jtilde^\star(\theta)=\ctilde(\theta)+\rho^2\Jtilde^\star(\theta+\alpha).
\]
Also, it holds that $T(\eta)=\eta+\alpha$ (since $\eta<\nu$).
Hence, for all $\theta\in\calV_\eta$,
\[
\Jtilde^\star(\theta)=\ctilde(\theta)+\rho^2\Jtilde^\star(\theta-\eta+T^{1-k}\nu).
\]
Since $\ctilde$ is differentiable, this implies that $\Jtilde^\star$ is not differentiable at $\eta$, by using the induction hypothesis (since $\Jtilde^\star$ is not differentiable at $T^{1-k}\nu$).

ii) Assume that $\eta\in(\nu,\nu+\alpha)$.
Then, the same argument as above shows that $\Jtilde^\star$ is not differentiable at $\eta$.

This concludes the proof.
\end{proof}

\begin{corollary}\label{cor:non-differentiable-I}
The function $\Jtilde^\star$ is non-differentiable on the backward orbit of $T$ from $\nu$, which is dense in $I$.
\end{corollary}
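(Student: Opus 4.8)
The plan is to combine the two preceding lemmas directly. Corollary~\ref{cor:non-differentiable-I} asserts two things: that $\Jtilde^\star$ fails to be differentiable at every point of the backward orbit $O\coloneqq\{T^{-k}\nu:k\in\Ne_{>0}\}$, and that $O$ is dense in $I$. The first assertion is exactly Lemma~\ref{lem:jtilde-non-differentiable-orbit}: for every $k\in\Ne$ the function $\Jtilde^\star$ is not differentiable at $T^{-k}\nu$ (taking $k=0$ gives the point $\nu$ itself via Lemma~\ref{lem:jtilde-non-differentiable}, and $k\geq1$ gives the rest of the orbit). So there is essentially nothing new to prove for the non-differentiability part; one just invokes that lemma.

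For the density assertion, I would invoke Lemma~\ref{lem:dense-orbit}, which states that for \emph{any} $\theta\in I$ the backward orbit $\{T^{-1}\theta,T^{-2}\theta,\ldots\}$ is dense in $I$. Applying this with $\theta=\nu\in I$ immediately gives that $O$ is dense in $I$. (One should note in passing that $\nu\in I=[\nu+\beta,\nu+\alpha)$ since $\beta<0<\alpha$, which is what makes the application of Lemma~\ref{lem:dense-orbit} legitimate.) Thus the corollary is a two-line consequence: density from Lemma~\ref{lem:dense-orbit}, non-differentiability at each orbit point from Lemma~\ref{lem:jtilde-non-differentiable-orbit}.

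There is no real obstacle here — the corollary is a packaging of the "key" Lemmas~\ref{lem:dense-orbit} and~\ref{lem:jtilde-non-differentiable-orbit} — so the only thing to be careful about is the quantifier bookkeeping: Lemma~\ref{lem:jtilde-non-differentiable-orbit} is stated for $k\in\Ne$ (including $0$), which covers the full set $\{\nu\}\cup O$, whereas the corollary speaks of "the backward orbit from $\nu$"; I would make sure the statement of the corollary and the cited lemma refer to the same set, or simply remark that adding or removing the single point $\nu$ does not affect density. If I wanted to phrase it even more cleanly, I would say: by Lemma~\ref{lem:dense-orbit} the set $O$ is dense in $I$, and by Lemma~\ref{lem:jtilde-non-differentiable-orbit} the function $\Jtilde^\star$ is non-differentiable at each point of $O$ (indeed at each point of $\{\nu\}\cup O$), which is precisely the claim.

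A subsequent paragraph of the paper would presumably lift this from the circle $\theta\mapsto x(\theta)$ back to $\Re^2$: since $J^\star$ is homogeneous of degree $2$ (Lemma~\ref{lem:sys-homogeneous}) and $\Jtilde^\star(\theta)=J^\star(x(\theta))$ with $x(\theta)$ tracing the unit circle, non-differentiability of $\Jtilde^\star$ at $\theta$ forces non-differentiability of $J^\star$ along the ray through $x(\theta)$, and the union of these rays over a dense set of angles is dense in $\Re^2$; then the worst-case function $J^\circ$ is handled by the symmetric argument (replacing $\min$ by $\max$ everywhere), and finally Lemma~\ref{lem:rational} supplies rational matrices. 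But for the corollary itself the proof is just the citation of the two lemmas.
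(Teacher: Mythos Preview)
Your proposal is correct and mirrors the paper's own proof, which is literally the one-line citation ``From Lemmas~\ref{lem:dense-orbit} and~\ref{lem:jtilde-non-differentiable-orbit}.'' Your extra remarks about $\nu\in I$ and the $k=0$ case are harmless sanity checks but not needed for the argument.
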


\begin{proof}
From Lemmas~\ref{lem:dense-orbit} and~\ref{lem:jtilde-non-differentiable-orbit}.
\end{proof}

Finally, we show that $\Jtilde^\star$ is non-differentiable on a dense subset of $\Re$, and then we deduce that $J^\star$ is non-differentiable on a dense subset of $\Re^2$.

\begin{lemma}\label{lem:non-differentiable-tilde}
The function $\Jtilde^\star$ is non-differentiable on a dense subset of $\Re$.
\end{lemma}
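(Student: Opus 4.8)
The plan is to leverage Corollary~\ref{cor:non-differentiable-I}, which already gives non-differentiability of $\Jtilde^\star$ on a dense subset of the interval $I=[\nu+\beta,\nu+\alpha)$, and to propagate this property to all of $\Re$ using the dynamic programming relation of Lemma~\ref{lem:sys-dp} together with the fact that $\ctilde$ is $C^\infty$. First I would observe that for any $\theta_0\in\Re$, there is $k\in\Ne$ and a sign choice (i.e.\ a finite sequence of shifts by $\alpha$ or $\beta$) carrying a neighborhood of $\theta_0$ into the interior of $I$, because $\alpha/\pi$ is irrational and so the forward orbit of $\theta_0$ under the map $S$ of Lemma~\ref{lem:dense-orbit} visits $\mathrm{int}(I)$; one then picks the actual point $T^k\theta_0$ lying in the image of the dense non-differentiability set under the corresponding inverse branch. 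Reversing this, every point of $\Re$ is at a finite ``$T$-distance'' from the dense bad set inside $I$.

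The engine is the following local statement, proved by the same induction already carried out in Lemma~\ref{lem:jtilde-non-differentiable-orbit}: if $\Jtilde^\star$ is non-differentiable at some point $\theta_1$, and if $\theta_0$ is such that a neighborhood of $\theta_0$ is mapped by a single shift $\theta\mapsto\theta+\alpha$ (or $\theta\mapsto\theta+\beta$) into a region where the $\min$ in~\eqref{eq:dp-optimal} is resolved by the corresponding branch, with $\theta_0+\alpha=\theta_1$ (resp.\ $\theta_0+\beta=\theta_1$), then $\Jtilde^\star$ is non-differentiable at $\theta_0$; indeed on that neighborhood $\Jtilde^\star(\theta)=\ctilde(\theta)+\rho^2\Jtilde^\star(\theta+\alpha)$ with $\ctilde$ smooth, so a kink in $\Jtilde^\star$ at $\theta_1$ forces one at $\theta_0$. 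The subtlety, and the main obstacle, is the branch-selection: I need to know that the relevant shift does \emph{not} cross the switching boundary $\nu+\pi\Ze$ near $\theta_0$, i.e.\ that $\Delta\Jtilde^\star$ has a definite sign on a neighborhood of $\theta_0$. This is exactly what Lemma~\ref{lem:delta-jtilde-well} supplies: $\Delta\Jtilde^\star$ vanishes only on the discrete set $\{\nu,\omega\}+\pi\Ze$ and is strictly signed elsewhere, so the argument goes through for every $\theta_0\notin\{\nu,\omega\}+\pi\Ze$, which suffices since that excluded set is countable and we only need a dense set of bad points.

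Putting the pieces together: starting from the dense set of non-differentiability points of $\Jtilde^\star$ in $I$ (Corollary~\ref{cor:non-differentiable-I}), I apply the inverse branches of $T$ repeatedly together with the periodicity $\ctilde(\theta+\pi)=\ctilde(\theta)$ and $\Jtilde^\star(\theta+\pi)=\Jtilde^\star(\theta)$ (which holds since $x(\theta+\pi)=-x(\theta)$ and $J^\star$ is homogeneous of degree $2$ by Lemma~\ref{lem:sys-homogeneous}) to reach, from any given point of $\Re$ and any $\varepsilon>0$, a non-differentiability point within distance $\varepsilon$. Concretely, pick an open interval $U$ of length $<\varepsilon$; its image under a suitable finite composition of branches of $S$ (using irrationality for surjectivity onto a subinterval of $I$) meets the dense bad set, and pulling a bad point back through the finitely many smooth shifts yields a non-differentiable point of $\Jtilde^\star$ in $U$. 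Hence $\Jtilde^\star$ is non-differentiable on a dense subset of $\Re$, which is the claim. The only real care needed is to stay away from the countably many boundary angles $\{\nu,\omega\}+\pi\Ze$ at each pull-back step, which costs nothing since density of the target set lets us always choose bad points avoiding any prescribed countable set, and the branches of $T$ are themselves defined away from $\nu+\pi\Ze$.
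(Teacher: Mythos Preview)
Your proposal is correct and follows essentially the same route as the paper: reduce to one period via homogeneity, invoke Corollary~\ref{cor:non-differentiable-I} for intervals meeting $I$, and pull back non-differentiability from $I$ through the Bellman relation using Lemma~\ref{lem:delta-jtilde-well} to fix the branch. The paper just organizes this as an explicit three-case split (does $\calV$ meet $I$, lie to its left in $(\omega-\pi,\nu+\beta)$, or to its right in $(\nu+\alpha,\omega)$) and writes the iterated formula $\Jtilde^\star(\theta)=\sum_{k<\ell}\rho^{2k}\ctilde(\theta+k\gamma)+\rho^{2\ell}\Jtilde^\star(\theta+\ell\gamma)$ with $\gamma\in\{\alpha,\beta\}$ accordingly; one small remark is that your appeal to $S$ and irrationality is unnecessary for this outer step, since to the left (resp.\ right) of $I$ the selected branch is always $\alpha$ (resp.\ $\beta$) and $\lvert\alpha\rvert,\lvert\beta\rvert<\pi/2=\mathrm{length}(I)$ forces the iterate to enter $I$ in finitely many steps.
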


\begin{proof}
We show that $\Jtilde^\star$ is non-differentiable on a dense subset of $(\omega-\pi,\omega)$.
Since for all $\theta\in\Re$ and $k\in\Ze$,
\[
\Jtilde^\star(\theta+k\pi)=J^\star(\pm x(\theta))=J^\star(x(\theta))=\Jtilde^\star(\theta)
\]
(where we used Lemma~\ref{lem:sys-homogeneous} for the second equality), this will conclude the proof.

Let $\calV\subseteq(\omega-\pi,\omega)$ be an open and nonempty interval.
We show that there is $\eta\in\calV$ at which $\Jtilde^\star$ is not differentiable.
We distinguish three cases:

i) Assume that $\calV\cap I\neq\emptyset$.
Then, the result follows from Corollary~\ref{cor:non-differentiable-I}.

ii) Assume that $\calV\subseteq(\omega-\pi,\nu+\beta)$.
Let $\ell\in\Ne$ be the smallest positive integer such that $\calV+\ell\alpha\cap I\neq\emptyset$.
Then, note that, by Lemmas~\ref{lem:sys-dp} and~\ref{lem:delta-jtilde-well}, for all $\theta\in\calV$,
\[
\Jtilde^\star(\theta)=\left\{\sum_{k=0}^{\ell-1} \rho^{2k} \ctilde(\theta+k\alpha)\right\} + \rho^{2\ell}\Jtilde^\star(\theta+\ell\alpha).
\]
By Corollary~\ref{cor:non-differentiable-I} and the assumption on $\ell$, there is $\eta\in\calV$ such that $\Jtilde^\star$ is not differentiable at $\eta+\ell\alpha$.
Using the above, this shows that $\Jtilde^\star$ is not differentiable at $\eta$.

iii) Assume that $\calV\subseteq(\nu+\alpha,\omega)$.
Let $\ell\in\Ne$ be the smallest positive integer such that $\calV+\ell\beta\cap I\neq\emptyset$.
Then, note that, by Lemmas~\ref{lem:sys-dp} and~\ref{lem:delta-jtilde-well}, for all $\theta\in\calV$,
\[
\Jtilde^\star(\theta)=\left\{\sum_{k=0}^{\ell-1} \rho^{2k} \ctilde(\theta+k\beta)\right\} + \rho^{2\ell}\Jtilde^\star(\theta+\ell\beta).
\]
By Corollary~\ref{cor:non-differentiable-I} and the assumption on $\ell$, there is $\eta\in\calV$ such that $\Jtilde^\star$ is not differentiable at $\eta+\ell\beta$.
Using the above, this shows that $\Jtilde^\star$ is not differentiable at $\eta$.

This concludes the proof.
\end{proof}

\begin{corollary}\label{cor:non-differentiable-all}
The function $J^\star$ is non-differentiable on a dense subset of $\Re^2$.
\end{corollary}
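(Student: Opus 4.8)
The plan is to transfer the one-dimensional non-differentiability of $\Jtilde^\star$ established in Lemma~\ref{lem:non-differentiable-tilde} to $\Re^2$, using two structural facts: $\Jtilde^\star$ is the restriction of $J^\star$ to the unit circle via the parametrization $\theta\mapsto x(\theta)$, and $J^\star$ is homogeneous of degree $2$ (Lemma~\ref{lem:sys-homogeneous}). So the non-differentiability set of $J^\star$ is a cone that hits a dense subset of the unit circle, hence is itself dense.

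First I would record two elementary invariance observations. On the one hand, since $\theta\mapsto x(\theta)$ is $C^\infty$, the chain rule shows that if $J^\star$ is differentiable at $x(\theta)$ then $\Jtilde^\star=J^\star\circ x(\cdot)$ is differentiable at $\theta$; contrapositively, if $\Jtilde^\star$ is non-differentiable at $\theta$, then $J^\star$ is non-differentiable at $x(\theta)$. On the other hand, for any $\lambda\neq0$ the map $p\mapsto\lambda p$ is a linear diffeomorphism of $\Re^2$, so $J^\star$ is differentiable at $\lambda p$ iff $p\mapsto J^\star(\lambda p)=\lambda^2J^\star(p)$ is differentiable at $p$, iff $J^\star$ is differentiable at $p$. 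Hence the set $N\subseteq\Re^2$ where $J^\star$ fails to be differentiable is a cone: $p\in N$ and $\lambda\neq0$ imply $\lambda p\in N$.

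Next I would assemble the dense set. Let $D\subseteq\Re$ be the dense set on which $\Jtilde^\star$ is non-differentiable (Lemma~\ref{lem:non-differentiable-tilde}), and put
\[
E=\{\lambda\, x(\theta):\lambda\neq0,\ \theta\in D\}.
\]
By the first observation $J^\star$ is non-differentiable at $x(\theta)$ for each $\theta\in D$, so by the cone property $E\subseteq N$. It remains to show $E$ is dense in $\Re^2$. The polar map $(r,\theta)\mapsto r\,x(\theta)$ from $(0,\infty)\times\Re$ onto $\Re^2\setminus\{0\}$ is a continuous open surjection (its Jacobian is $r\neq0$, so it is a local diffeomorphism), hence it sends the dense subset $(0,\infty)\times D$ to a dense subset of $\Re^2\setminus\{0\}$; this image is contained in $E$, and $\Re^2\setminus\{0\}$ is dense in $\Re^2$, so $E$ is dense in $\Re^2$. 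Therefore $N$ contains a dense subset of $\Re^2$, which is exactly the claim.

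I do not expect a real obstacle: all the hard work (the existence of $\nu$ with $\Delta\Jtilde^\star(\nu)=0$, the non-differentiability of $\Jtilde^\star$ at $\nu$ and along its backward orbit, the density of that orbit, and the extension to a dense subset of $\Re$) is already done in Lemmas~\ref{lem:delta-jtilde-well}--\ref{lem:non-differentiable-tilde}. The only step deserving a word of care is the chain-rule argument: it uses only that $J^\star$ admits a total derivative at the point in question---no $C^1$ regularity is needed---so that the failure of this derivative to exist genuinely forces $\Jtilde^\star$ to be non-differentiable there.
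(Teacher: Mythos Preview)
Your proposal is correct and follows the same route as the paper: transfer non-differentiability from $\Jtilde^\star$ to $J^\star$ on the unit circle via the chain rule, then use the homogeneity of $J^\star$ (Lemma~\ref{lem:sys-homogeneous}) to extend to a dense cone in $\Re^2$. The paper's own proof is a two-sentence version of exactly this argument; you have simply spelled out the chain-rule contrapositive and the density-via-polar-coordinates step that the paper leaves implicit.
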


\begin{proof}
By Lemma~\ref{lem:non-differentiable-tilde}, we have that $J^\star$ is not differentiable on a dense subset $U$ of $\{x\in\Re^2:\lVert x\rVert=1\}$.
Hence, by Lemma~\ref{lem:sys-homogeneous}, we deduce that $J^\star$ is not differentiable on $\Re U$ which is dense in $\Re^2$.
\end{proof}

This concludes the proof of the non-differentiability of $J^\star$ on a dense subset of $\Re^2$.
Now, let $J^\circ$ be the worst-case value function of $\calA$ with cost $c$.
By leveraging the result for $J^\star$, we show \ifextended easily\footnote{Note that a derivation as for $J^\star$ would also have been possible, but it is easier to build upon the work that has already been done.} \else in the extended version~\cite{berger2025onthedifferentiability} \fi that $J^\circ$ is non-differentiable on a dense subset of $\Re^2$.

\ifextended
\begin{corollary}\label{cor:non-differentiable-all-worst}
The function $J^\circ$ is non-differentiable on a dense subset of $\Re^2$.
\end{corollary}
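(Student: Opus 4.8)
The plan is to express $J^\circ$ in terms of $J^\star$ via a symmetry of the construction, and then transfer the non-differentiability already established for $J^\star$ instead of redoing all of Lemmas~\ref{lem:sys-dp}--\ref{lem:non-differentiable-tilde} with $\max$ in place of $\min$. The starting observation is that the cost decomposes as $c(x)=\lVert x\rVert^2+x_2^2$ and, by Lemma~\ref{lem:sys-stable}, every matrix in $\calA$ scales the Euclidean norm exactly by $\rho$. Hence, for any signal $\sigma$ and any $x$, writing $\xi_2(t,x,\sigma)$ for the second component of $\xi(t,x,\sigma)$, $J(x,\sigma)=\frac{\lVert x\rVert^2}{1-\rho^2}+\sum_{t=0}^\infty\xi_2(t,x,\sigma)^2$, where the first term is independent of $\sigma$. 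Taking $x=x(\theta)$ and tracking the angle (which adds $\alpha$ or $\beta$ at each step), $\xi_2(t,x(\theta),\sigma)^2=\rho^{2t}\sin^2(\theta_\sigma(t))$ with $\theta_\sigma$ the angular trajectory; so $\Jtilde^\star$ and $\Jtilde^\circ$ differ from the constant $\frac{1}{1-\rho^2}$ only through the $\inf$, resp.\ $\sup$, over $\sigma$ of $\sum_{t}\rho^{2t}\sin^2(\theta_\sigma(t))$.

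Next I would exploit the identity $\sin^2(\phi)=1-\sin^2(\phi+\pi/2)$ together with the fact that the angular dynamics commutes with the shift $\theta\mapsto\theta+\pi/2$ (because $\pi/2$ commutes with translation by $\alpha$ and by $\beta$). Rewriting term by term \emph{before} taking the extremum gives $\sup_\sigma\sum_t\rho^{2t}\sin^2(\theta_\sigma(t))=\frac{1}{1-\rho^2}-\inf_\sigma\sum_t\rho^{2t}\sin^2\!\big((\theta+\pi/2)_\sigma(t)\big)$, and substituting back yields the clean relation $\Jtilde^\circ(\theta)=\frac{3}{1-\rho^2}-\Jtilde^\star(\theta+\pi/2)$. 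To move between $J^\circ$ on $\Re^2$ and $\Jtilde^\circ$ on angles I would also record that $J^\circ$ is homogeneous of degree $2$, which follows from the argument of Lemma~\ref{lem:sys-homogeneous} verbatim, with $\sup$ in place of $\inf$.

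Finally, since addition of a constant and the affine reparametrization $\theta\mapsto\theta+\pi/2$ preserve (non-)differentiability, Lemma~\ref{lem:non-differentiable-tilde} immediately shows that $\Jtilde^\circ$ is non-differentiable on a dense subset of $\Re$; lifting this to a dense subset of $\Re^2$ via the homogeneity of $J^\circ$, exactly as in the proof of Corollary~\ref{cor:non-differentiable-all}, completes the argument.

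The one point requiring care---rather than a genuine obstacle---is the exactness of the $\sup$/$\inf$ interchange, i.e.\ that there is no $\varepsilon$ slack: this holds because $\sin^2(\theta_\sigma(t))=1-\sin^2((\theta+\pi/2)_\sigma(t))$ is a per-signal, per-time-step identity, so $\sup_\sigma\big(C-F(\sigma)\big)=C-\inf_\sigma F(\sigma)$ applies directly, and because $\theta\mapsto\theta+\pi/2$ genuinely conjugates the closed-loop system (the optimal/worst-case signals for the shifted initial angle are literally the same), which is immediate since the translations commute.
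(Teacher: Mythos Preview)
Your argument is correct and lands on exactly the same identity as the paper, namely $\Jtilde^\circ(\theta)=\dfrac{3}{1-\rho^2}-\Jtilde^\star(\theta+\pi/2)$, after which the transfer of non-differentiability and the lift to $\Re^2$ proceed identically. The only difference is how the identity is obtained: the paper \emph{postulates} this formula for $\Jtilde^\circ$, checks algebraically that it satisfies the $\max$-Bellman recursion of Lemma~\ref{lem:sys-dp}, and then appeals to uniqueness of the dynamic-programming fixed point to conclude that it equals the true worst-case value function; you instead \emph{derive} it directly from the definition of the cost-to-go via the decomposition $c(x)=\lVert x\rVert^2+x_2^2$, the exact norm scaling $\lVert\xi(t)\rVert=\rho^t\lVert x\rVert$, and the per-term identity $\sin^2(\phi)=1-\sin^2(\phi+\pi/2)$. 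Your route is a bit more self-contained (no external uniqueness result needed) and makes the $\sup\leftrightarrow\inf$ swap transparent; the paper's route is shorter on the page once one is willing to cite the standard DP uniqueness argument.
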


\begin{proof}
Note that for all $\theta\in\Re$, $\ctilde(\theta)=2-\cos^2(\theta)$.
Hence, for all $\theta\in\Re$, $\ctilde(\theta+\pi/2)=3-\ctilde(\theta)$.
For all $\theta\in\Re$, let $\Jtilde^\circ(\theta)=-\Jtilde^\star(\theta+\pi/2)+3/(1-\rho^2)$.
Then,
\begin{align*}
\Jtilde^\circ(\theta) &= -\Jtilde^\star(\theta+\pi/2)+3/(1-\rho^2) \\
\intertext{(by Lemma~\ref{lem:sys-dp}:)}
&= -\ctilde(\theta+\pi/2)-\rho^2\min\{\Jtilde^\star(\theta+\pi/2+\alpha), \\
&\hspace{2.5cm} \Jtilde^\star(\theta+\pi/2+\beta)\}+3/(1-\rho^2) \\
&= \ctilde(\theta)-3+\rho^2\max\{-\Jtilde^\star(\theta+\pi/2+\alpha),\\
&\hspace{2.5cm}-\Jtilde^\star(\theta+\pi/2+\beta)\}+3/(1-\rho^2) \\
&= \ctilde(\theta)-3+\rho^2\max\{\Jtilde^\circ(\theta+\alpha),\Jtilde^\circ(\theta+\beta)\} \\
&\hspace{2.5cm}+3/(1-\rho^2)-\rho^23/(1-\rho^2) \\
&= \ctilde(\theta)+\rho^2\max\{\Jtilde^\circ(\theta+\alpha),\Jtilde^\circ(\theta+\beta)\}.
\end{align*}
Hence, $J^\circ$ defined by $J^\circ(x)=r^2\Jtilde^\circ(\theta)$ where $x=rx(\theta)$ with $r\geq0$ and $\theta\in\Re$, satisfies~\eqref{eq:dp-worst}.
This implies that $J^\circ$ is the worst-case value function, by classical arguments in dynamic programming (see, e.g.,~\cite[Theorem~3.1]{meyn2022control}).
\end{proof}
\fi

This concludes the proof of Theorem~\ref{thm:non-differentiable} for the case $n=2$.

\subsubsection{Case $n>2$}

\ifextended
Now, we exploit the above to obtain a similar system for $n>2$.
Therefore, consider the matrices
\begin{align*}
\hat{A}_1 &= \rho\left[\begin{array}{ccc}
\cos(\alpha) & -\sin(\alpha) & 0_{1\times(n-2)} \\
\sin(\alpha) & \cos(\alpha) & 0_{1\times(n-2)} \\
0_{(n-2)\times1} & 0_{(n-2)\times1} & 0_{(n-2)\times(n-2)}
\end{array}\right], \\
\hat{A}_2 &= \rho\left[\begin{array}{ccc}
\cos(\beta) & -\sin(\beta) & 0_{1\times(n-2)} \\
\sin(\beta) & \cos(\beta) & 0_{1\times(n-2)} \\
0_{(n-2)\times1} & 0_{(n-2)\times1} & 0_{(n-2)\times(n-2)}
\end{array}\right],
\end{align*}
where $\alpha$, $\beta$ and $\rho$ are as above, and the cost function
\[
c(x)=x_1^2+2x_2^2,
\]
where $x=[x_1,\ldots,x_n]^\top$.
Let $\hat{J}^\star$ and $\hat{J}^\circ$ be the associated optimal and worst-case value functions respectively.

\begin{lemma}\label{lem:link-2-general}
It holds that $\hat{J}^\star(x)=J^\star([x_1,x_2]^\top)$ and $\hat{J}^\circ(x)=J^\circ([x_1,x_2]^\top)$, where $x=[x_1,\ldots,x_n]^\top$ and $J^\star$ and $J^\circ$ are as in the above subsubsection (case $n=2$).
\end{lemma}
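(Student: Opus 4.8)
The plan is to exploit the block structure of $\hat A_1,\hat A_2$, which act nontrivially only on the first two coordinates. Let $P\colon\Re^n\to\Re^2$ be the projection $Px=[x_1,x_2]^\top$ and $Q\colon\Re^2\to\Re^n$ the embedding $Qy=[y_1,y_2,0,\ldots,0]^\top$, so that $PQ=\mathrm{id}_{\Re^2}$. A direct inspection of the matrices gives the factorization $\hat A_i=Q\,A_i\,P$ for $i\in\{1,2\}$. Moreover, the $n$-dimensional cost function---call it $\hat c$, reserving $c$ for the $n=2$ cost---depends only on $x_1,x_2$ and satisfies $\hat c(x)=c([x_1,x_2]^\top)=c(Px)$; in particular $\hat c(Qy)=c(y)$ for all $y\in\Re^2$.

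First I would prove by induction on $t$ the \emph{trajectory identity}: for every $x\in\Re^n$, every switching signal $\sigma$, and every $t\ge 1$, the solution of the $n$-dimensional system satisfies $\xi(t,x,\sigma)=Q\,\zeta(t,Px,\sigma)$, where $\zeta(\cdot,Px,\sigma)$ denotes the solution of the $n=2$ system from $Px$ under $\sigma$. The case $t=1$ is exactly $\hat A_{\sigma(0)}x=Q\,A_{\sigma(0)}Px$, and the inductive step uses $\hat A_{\sigma(t)}(Qz)=Q\,A_{\sigma(t)}(PQz)=Q\,A_{\sigma(t)}z$. In words: after the first step the $n$-dimensional trajectory lives in the coordinate subspace $Q(\Re^2)$, on which the dynamics reproduce those of the $n=2$ system.

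Then I would match the cost-to-go term by term. For $t=0$, $\hat c(\xi(0,x,\sigma))=\hat c(x)=c(Px)=c(\zeta(0,Px,\sigma))$; for $t\ge1$, $\hat c(\xi(t,x,\sigma))=\hat c(Q\,\zeta(t,Px,\sigma))=c(\zeta(t,Px,\sigma))$. Summing over $t\in\Ne$ shows that the cost-to-go of the $n$-dimensional system from $x$ under $\sigma$ equals that of the $n=2$ system from $Px$ under $\sigma$, for every $\sigma$. Taking the infimum over $\sigma$ gives $\hat J^\star(x)=J^\star([x_1,x_2]^\top)$, and taking the supremum gives $\hat J^\circ(x)=J^\circ([x_1,x_2]^\top)$.

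Since every step is routine bookkeeping, I do not anticipate a real obstacle; the only slightly delicate point is the factorization $\hat A_i=Q\,A_i\,P$ together with the induction showing that the trajectory is trapped in $Q(\Re^2)$ from time $1$ on, which is what lets the two cost series be compared term by term. As an alternative to the trajectory argument, one could verify directly that $x\mapsto J^\star([x_1,x_2]^\top)$ satisfies the Bellman equation~\eqref{eq:dp-optimal} for $\{\hat A_1,\hat A_2\}$ and that $x\mapsto J^\circ([x_1,x_2]^\top)$ satisfies~\eqref{eq:dp-worst}, then conclude by uniqueness of the value function, as invoked elsewhere in this section.
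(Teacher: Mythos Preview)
Your argument is correct and is precisely the kind of routine verification the paper has in mind when it writes ``Straightforward (omitted).'' One caveat: the paper's displayed $n$-dimensional cost $\cos^2(x_1)+2\sin^2(x_2)$ is evidently a typo for $x_1^2+2x_2^2$; your identity $\hat c(x)=c(Px)$ holds under this intended reading, which is the only one that makes the lemma true.
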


\begin{proof}
Straightforward since the dynamics can be separated into two independent dynamics, one for the first two components ($x_1,x_2$) of the state, and one for the last $n-2$ components ($x_3,\ldots,x_n$).
The contribution of the last $n-2$ components to the cost is $0$.
\end{proof}

Lemma~\ref{lem:link-2-general} implies that $J^\star$ and $J^\circ$ are non-differentiable on dense subsets of $\Re^n$.

\begin{corollary}
The functions $\hat{J}^\star$ and $\hat{J}^\circ$ are non-differentiable on dense subsets of $\Re^n$.
\end{corollary}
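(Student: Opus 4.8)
The final statement to prove is the corollary asserting that $\hat{J}^\star$ and $\hat{J}^\circ$ are non-differentiable on dense subsets of $\Re^n$, building on Lemma~\ref{lem:link-2-general}.

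The plan is as follows. First I would observe that by Lemma~\ref{lem:link-2-general}, the function $\hat{J}^\star$ factors through the projection $\pi\colon\Re^n\to\Re^2$, $\pi(x)=[x_1,x_2]^\top$, namely $\hat{J}^\star = J^\star\circ\pi$, and similarly $\hat{J}^\circ = J^\circ\circ\pi$. Next I would invoke Corollary~\ref{cor:non-differentiable-all} and Corollary~\ref{cor:non-differentiable-all-worst}, which provide a dense subset $D\subseteq\Re^2$ on which $J^\star$ (respectively $J^\circ$) fails to be differentiable. The key step is then to show that $\pi^{-1}(D)$ is (i) dense in $\Re^n$ and (ii) a set of non-differentiability of $\hat{J}^\star$. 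Density is immediate: if $D$ is dense in $\Re^2$, then $D\times\Re^{n-2}$ is dense in $\Re^n$, and this product is exactly $\pi^{-1}(D)$.

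For point (ii), the argument is that differentiability of $\hat{J}^\star = J^\star\circ\pi$ at a point $x$ with $\pi(x)=p\in D$ would force differentiability of $J^\star$ at $p$. Concretely, suppose $\hat{J}^\star$ were differentiable at such an $x$; then restricting to the affine subspace through $x$ on which the last $n-2$ coordinates are frozen, the restricted function is $y\mapsto J^\star(y)$ (with $y$ ranging over a neighborhood of $p$ in $\Re^2$, identified with that slice), and the restriction of a differentiable function to an affine subspace is differentiable. Hence $J^\star$ would be differentiable at $p$, contradicting $p\in D$. Therefore $\hat{J}^\star$ is non-differentiable at every point of $\pi^{-1}(D)$, a dense subset of $\Re^n$. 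The identical argument with $J^\circ$ in place of $J^\star$ handles $\hat{J}^\circ$.

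I do not anticipate a genuine obstacle here; the only mild subtlety is making the slicing argument precise — that is, verifying that a function differentiable at a point remains differentiable when restricted to an affine subspace through that point, and that under the identification of the slice $\{(y_1,y_2,a_3,\ldots,a_n)\}$ with $\Re^2$ the restriction of $\hat{J}^\star$ really is $J^\star$, which is exactly the content of Lemma~\ref{lem:link-2-general}. Since the paper elsewhere treats such reductions as routine (the proof of Lemma~\ref{lem:link-2-general} itself is "omitted"), I would keep this proof to a couple of sentences: cite the two corollaries for the dense non-differentiability set $D$ in $\Re^2$, note that $D\times\Re^{n-2}$ is dense in $\Re^n$, and conclude via the slicing argument together with Lemma~\ref{lem:link-2-general}.
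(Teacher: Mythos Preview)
Your proposal is correct and matches the paper's approach: the paper simply states that Lemma~\ref{lem:link-2-general} implies the corollary, leaving the reader to fill in exactly the density-plus-slicing argument you spelled out. If anything, your version is more complete than the paper's, which gives no further detail.
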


This concludes the proof of Theorem~\ref{thm:non-differentiable} for the general case.
\else
See extended version~\cite{berger2025onthedifferentiability}.
\fi

\subsubsection*{Rational matrices}

Finally, it remains to show that there are rational matrices of the form~\eqref{eq:A1}--\eqref{eq:A2}.
\ifextended\relax\else For that, we refer the reader to the extended version~\cite{berger2025onthedifferentiability}.\fi

\ifextended
\begin{lemma}\label{lem:rational}
The matrices
\[
A_1 = \rho\left[\begin{array}{cc}
0.8 & -0.6 \\
0.6 & 0.8
\end{array}\right] \;\: \text{and} \;\;
A_2 = \rho\left[\begin{array}{cc}
0.6 & 0.8 \\
-0.8 & 0.6
\end{array}\right]
\]
have the form~\eqref{eq:A1}--\eqref{eq:A2} with $\alpha=\arctan(3/4)\in(\pi/8,3\pi/8)$ and $\beta=\arctan(-4/3)\in(-3\pi/8,-\pi/8)$.
Furthermore, it holds $\beta=\alpha-\pi/2$ and $\alpha/\pi\notin\Qe$.
\end{lemma}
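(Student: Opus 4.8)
The plan is to separate the two kinds of content in the statement: the routine verification that $A_1,A_2$ have the claimed form with $\alpha=\arctan(3/4)$, $\beta=\arctan(-4/3)$, $\beta=\alpha-\pi/2$, and $\alpha,\beta$ in the prescribed intervals; and the one substantive claim, that $\alpha/\pi\notin\Qe$.

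For the first part, I would note that $0.8^2+0.6^2=1$ with both numbers positive, so there is a unique $\alpha\in(0,\pi/2)$ with $\cos\alpha=0.8$ and $\sin\alpha=0.6$, and it satisfies $\tan\alpha=0.6/0.8=3/4$, hence $\alpha=\arctan(3/4)$; this identifies $A_1$. Setting $\beta\coloneqq\alpha-\pi/2$ gives $\cos\beta=\sin\alpha=0.6$ and $\sin\beta=-\cos\alpha=-0.8$, which identifies $A_2$, and $\tan\beta=-4/3$ with $\beta\in(-\pi/2,0)$ gives $\beta=\arctan(-4/3)$; the relation $\beta=\alpha-\pi/2$ holds by construction. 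Finally, monotonicity of $\arctan$ together with $\tan(\pi/8)=\sqrt2-1<3/4<\sqrt2+1=\tan(3\pi/8)$ yields $\alpha\in(\pi/8,3\pi/8)$, and then $\beta=\alpha-\pi/2\in(-3\pi/8,-\pi/8)$.

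The hard part is $\alpha/\pi\notin\Qe$, which I would prove by contradiction. If $\alpha/\pi\in\Qe$, write $\alpha=(p/q)\pi$ with $p,q$ integers, $q>0$; then $z\coloneqq e^{i\alpha}=\cos\alpha+i\sin\alpha=(4+3i)/5$ satisfies $z^{2q}=e^{2\pi i p}=1$, so $z$ is a root of unity and hence an algebraic integer. But the minimal polynomial of $z$ over $\Qe$ is $X^2-\tfrac85X+1$: indeed $5z-4=3i$, so $(5z-4)^2=-9$, i.e. $25z^2-40z+25=0$, and this monic quadratic is irreducible over $\Qe$ since its discriminant $-36/25$ is negative. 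This polynomial does not have integer coefficients (equivalently, $(4+3i)/5\notin\Ze[i]$, the ring of integers of $\Qe(i)$), contradicting the fact that the minimal polynomial of an algebraic integer lies in $\Ze[X]$. Hence $\alpha/\pi\notin\Qe$. Alternatively, one may simply invoke Niven's theorem: if $\theta/\pi\in\Qe$ and $\tan\theta\in\Qe$ then $\tan\theta\in\{0,\pm1\}$, whereas $\tan\alpha=3/4$.

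I expect everything except this last irrationality step to be immediate computation. The only external ingredients needed are standard: that a root of unity is an algebraic integer and that the minimal polynomial over $\Qe$ of an algebraic integer has integer coefficients (or, on the alternative route, Niven's theorem). No subtlety is expected beyond stating the equivalence "$\alpha/\pi\in\Qe$ iff $e^{i\alpha}$ is a root of unity'' carefully enough to reduce to the algebraic-integer obstruction.
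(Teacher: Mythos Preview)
Your proposal is correct. The routine verifications are carried out more explicitly than in the paper, which simply declares them ``straightforward to check,'' but the content is the same.

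For the only substantive point, the irrationality of $\alpha/\pi$, the paper just invokes Niven's theorem~\cite[Corollary~3.12]{niven1956irrational}, which you also mention as your alternative route. Your primary argument, however, is different and self-contained: you observe that $e^{i\alpha}=(4+3i)/5$ would be a root of unity, hence an algebraic integer, and derive a contradiction from the fact that its minimal polynomial $X^2-\tfrac85X+1$ over $\Qe$ does not lie in $\Ze[X]$. This buys you independence from any external citation (and in fact reproves the relevant special case of Niven's theorem), at the cost of a few extra lines. The paper's one-line citation is shorter but relies on the reader knowing or looking up Niven's result. Both approaches are standard and either would be acceptable here.
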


\begin{proof}
The form of $A_1$ and $A_2$ and the fact that $\beta=\alpha-\pi/2$ are straightforward to check.
The fact that $\alpha/\pi\notin\Qe$ follows from Niven's theorem~\cite[Corollary~3.12]{niven1956irrational}.
\end{proof}
\fi

An illustration of the non-differentiability of the optimal value function for the system in~\ifextended Lemma~\ref{lem:rational} \else\cite{berger2025onthedifferentiability} \fi with $\rho=0.9$ is presented in Fig.~\ref{fig:optimal-example}.
Even though the value of $\rho$ (chosen for readability of the figure) is larger that than in the proof of Theorem~\ref{thm:non-differentiable}, we still observe that the number of ``kinks'' (points of non-differentiability) increases with $T$.

\begin{figure}[h]
    \centering
    \begin{tikzpicture}
        \begin{groupplot}[
            group style={
                group size=2 by 2,
                horizontal sep=0.9cm,
                vertical sep=1.0cm,
                xlabels at=edge bottom,
                ylabels at=edge left,
            },
            width=0.55\linewidth,   
            height=0.36\linewidth,
            grid=both,
            xlabel={$\theta$},
            ylabel={$\Jtilde^\star_T(\theta)$},
            xmin=-3.1416,
            xmax=3.1416,
            title style={yshift=-5pt},
        ]

        \nextgroupplot[
            title = {$T=2$}
        ]
        \addplot[
            thick
        ] table[x index=0, y index=1] {cost_1.txt};

        \nextgroupplot[
            title={$T=3$}
        ]
        \addplot[
            thick
        ] table[x index=0, y index=1] {cost_2.txt};

        \nextgroupplot[
            title={$T=4$}
        ]
        \addplot[
            thick
        ] table[x index=0, y index=1] {cost_3.txt};

        \nextgroupplot[
            title={$T=5$}
        ]
        \addplot[
            thick
        ] table[x index=0, y index=1] {cost_4.txt};

        \end{groupplot}
    \end{tikzpicture}
    \caption{Illustration of the non-smoothness of the optimal value function for the switched linear system in~\ifextended Lemma~\ref{lem:rational} \else\cite{berger2025onthedifferentiability} \fi with $\rho=0.9$.
    Each plot shows $\Jtilde^\star_T(\theta)=\min_{\sigma:\Ne\to[m]}\sum_{t=0}^{T-1} c(\xi(t,x(\theta),\sigma))$ the optimal cumulative cost at horizon $T$, starting from $x(\theta)=[\cos(\theta),\sin(\theta)]^\top$.
    The cost function $c$ is as in~\eqref{eq:cost}.
    We see that the number of ``kinks'' (points of non-differentiability) increases with $T$.}
    \label{fig:optimal-example}
\end{figure}

\section{Conclusions}

We showed that the optimal and worst-case value functions of switched linear systems can be highly unsmooth.
Existing results in the literature (e.g.,~\cite{zhang2009onthevalue}) suggested it, but no formal statement or proof were available.
This work addresses this gap, by providing a constructive proof that these functions can be non-differentiable on dense subsets on the state space.
In future work, we plan to identify sufficient conditions under which the value functions are piecewise differentiable.

\ifextended
\addtolength{\textheight}{-10cm}
\else
\addtolength{\textheight}{-12cm}  
\fi






\bibliographystyle{ieeetran}
\bibliography{myrefs}

\end{document}